\documentclass[12pt]{article}
\usepackage{graphicx}
\usepackage[margin=30truemm]{geometry}

\usepackage{amsmath}
\usepackage{amsthm}

\newtheorem{thm}{Theorem}[section]
\newtheorem{lem}[thm]{Lemma}

\newtheorem{prop}[thm]{Property}
\newtheorem{conj}[thm]{Conjecture}

\usepackage{setspace}
\title{Completely Independent Spanning Trees in $k$-Outerplanar Triangulated Discs}
\author{Toru Araki}
\date{Graduate School of Informatics, Gunma University \\ Maebashi, Gunma, Japan}

\begin{document}

\maketitle

\begin{abstract}
  Let $T_{1}, T_{2}, \dots, T_{k}$ be $k$ spanning trees of a graph $G$.
  For any pair of vertices $u$ and $v$, if the $u$--$v$ paths in the $k$ spanning trees are pairwise openly disjoint, then the spanning trees are called completely independent spanning trees (CISTs) of $G$.
  In this paper, we first prove that every 3-connected 2-outerplanar triangulated disc has two completely independent spanning trees.
  Next, for a 3-connected 3-outerplanar triangulated disc $G$, we provide sufficient conditions for $G$ to have two completely independent spanning trees.
  We provide an example of a 3-connected 4-outerplanar triangulation that does not have two completely independent spanning trees.
\end{abstract}


\section{Introduction}
\label{sec:introduction}

The graphs considered in this paper are undirected, finite, and simple.
For a graph $G$, the sets $V(G)$ and $E(G)$ are the vertex and edge sets of $G$, respectively.
The set of vertices adjacent to $v$ is the \emph{neighborhood} of $v$, denoted by $N_{G}(v)$.
For a set $U \subseteq V(G)$, we define $N_{G}(U) = \bigcup_{v \in U} N_{G}(v)$.
The degree of $v$ in $G$ is $\deg_{G} v = |N_{G}(v)|$.
For $S \subseteq V(G)$, $G-S$ denotes the graph obtained from $G$ by deleting all the vertices in $S$ and the edges incident with them.
For a nonempty subset $S \subseteq V(G)$, the subgraph induced by $S$ is denoted by $G[S]$.
A walk and a path from $u$ to $v$ are referred to as a $u$--$v$ walk and a $u$--$v$ path, respectively.
Two $u$--$v$ paths $P$ and $Q$ are \emph{openly disjoint} if they are edge-disjoint and have no common vertex except for $u$ and $v$.
Let $T_{1}, T_{2}, \dots, T_{k}$ be $k$ spanning trees of $G$.
If, for any pair of vertices $u$ and $v$, the $u$--$v$ paths in $T_{1}, T_{2}, \dots, T_{k}$ are pairwise openly disjoint, then the spanning trees are called \emph{completely independent spanning trees} (CISTs) of $G$.

A \emph{planar embedding} of a planar graph $G$ is an embedding of $G$ in a plane such that the edges of $G$ do not cross each other.
A graph is \emph{planar} if it has a planar embedding.
A planar graph with a planar embedding is called a \emph{plane graph}.
The unbounded face of a plane graph is called the \emph{outer face}, whereas the other faces are called the \emph{inner faces}.
A face with three vertices is called a \emph{triangle}.
A \emph{planar triangulation} is a plane graph such that all faces (including the outer face) are triangles.
A \emph{triangulated disc} is a plane graph with a plane embedding such that all inner faces are triangles.

A planar embedding of a graph is \emph{$1$-outerplanar} if all vertices lie on the outer face.
For $k \geq 2$, a planar embedding of a graph is \emph{$k$-outerplanar} if removing all the vertices on the outer face results in a $(k-1)$-outerplanar embedding of the graph.
A plane graph is $k$-outerplanar if it has a $k$-outerplanar embedding.
A 1-outerplanar graph is called an \emph{outerplanar graph}.
The \emph{outerplanarity} of a planar graph is the minimum $k$ such that it has a $k$-outerplanar embedding.

The concept of completely independent spanning trees was introduced by Hasunuma~\cite{hasunuma01}.
Completely independent spanning trees provide multiple disjoint paths between any pair of nodes in the network; hence, the concept can be applied to fault-tolerant broadcasting in networks.
Readers are referred to~\cite{CHANG202175,9217977,pai21,pai22} for CISTs and their applications.
The existence of CISTs in special graph classes has been studied, for example, the underlying graph of line digraphs~\cite{hasunuma01}, 4-connected planar triangulation~\cite{hasunuma02}, torus graphs~\cite{hasunuma11}, complete graphs~\cite{pai13}, $k$-trees~\cite{araki15:_compl}, split graphs~\cite{chen23:_two}, $P_{4}$-free graphs~\cite{yuan25:_p}, and line graphs~\cite{hasunuma23:_compl}.
Recently, several sufficient conditions have been investigated to guarantee the existence of CISTs in graphs.
The author proved that Dirac’s and Fleischner's conditions are sufficient for the existence of CISTs~\cite{araki14}.
These conditions are inspired by the sufficient conditions for Hamiltonicity.
Dirac's condition has been generalized to $k$ CISTs~\cite{hong20:_hamil,hasunuma16}, and Fleischner's condition has been generalized to $k$ CISTs~\cite{hong25:_compl}.
For more details, readers may refer to~\cite{cheng23}.

For CISTs in planar graphs, Hasunuma~\cite{hasunuma01} proved the following theorem.

\begin{thm}[\cite{hasunuma01}]
  \label{thm:hasu}
  There are two completely independent spanning trees in any 4-connected planar triangulation.
\end{thm}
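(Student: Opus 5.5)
The natural route is through the standard characterization of CISTs due to Hasunuma: $G$ has two CISTs if and only if $V(G)$ can be partitioned into $V_1, V_2$ so that each induced subgraph $G[V_i]$ is connected and the bipartite subgraph $B(V_1,V_2)$ (the edges between $V_1$ and $V_2$) has no tree component. Equivalently, every vertex of $V_1$ has a neighbour in $V_2$ and vice versa, and each component of $B$ contains a cycle. So the plan is to produce such a partition for any 4-connected planar triangulation $G$.

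\textbf{Step 1: use a Hamiltonian cycle.} By Whitney's theorem (or Tutte's theorem, since $G$ is 4-connected), $G$ has a Hamiltonian cycle $C$. In the embedding, $C$ splits the plane into two sides, and every chord of $C$ lies on exactly one side. Each side of $C$ is then a triangulated polygon (a maximal outerplanar graph) on the vertex set $V(G)$ with boundary $C$. Since $G$ is 4-connected, it has no separating triangles.

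\textbf{Step 2: choose the partition via the dual trees.} The inner faces on one side form a tree in the weak dual of the triangulated polygon, and the same holds on the other side. Colour the vertices so that $V_1$ and $V_2$ are each a union of arcs of $C$ and $G[V_i]$ is connected. One concrete choice is to take $V_1$ to be a subpath of $C$, which makes $G[V_1]$ and $G[V_2]$ connected automatically since both are paths along $C$. The harder condition is that every component of $B(V_1,V_2)$ contains a cycle. With the naive choice, $B$ consists only of the two cut edges of $C$ plus chords crossing between the arcs, and the crossing chords on each side form a "ladder" between the two arcs. Consecutive crossing chords on the same side bound a triangle with a boundary edge, so they share an endpoint. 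Hence the crossing chords on one side form a path (a zigzag), and the two sides together with the two cut edges of $C$ should form a single component. That component contains a cycle, because the zigzag on one side joins the two cut edges, and so does the zigzag on the other side.

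\textbf{Step 3: verify the conditions and handle the obstacle.} We must check that every vertex lies on some crossing edge, that is, every vertex of $V_1$ has a neighbour in $V_2$. This is the main obstacle. A vertex deep inside the arc $V_1$ might have all its neighbours inside $V_1$. I would instead choose $V_1$ as a non-contiguous set, for example alternating along $C$ guided by the dual tree, so that the triangles adjacent to each vertex are multicoloured. Alternatively, I would use induction by contracting an edge not in a separating triangle, which preserves 4-connectivity in a suitable sense, and then lift the partition back. The 4-connectivity is what guarantees that each vertex has degree at least 4 and lies on triangles on both sides of $C$. This should let us colour it differently from at least one neighbour while keeping both colour classes connected through the triangles. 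I expect this connectivity-plus-domination balance to be where the real work lies, and I would first try the contraction induction.
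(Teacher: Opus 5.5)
The paper does not prove this statement; it quotes it from Hasunuma. Your proposal therefore has to stand on its own, and as written it has a genuine gap: the step that carries the content of the theorem is never carried out.

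The reduction to a CIST-partition is fine (that characterization is Theorem~\ref{thm:araki}, due to Araki). Steps~1--2 do make $G[V_1]$ and $G[V_2]$ connected, but that part is cheap. In a plane triangulation, once both classes induce connected subgraphs, the bichromatic edges form a bond, so they are dual to a single cycle $Z$ of $G^{*}$. Consecutive edges of $Z$ are dual to two edges of a common face, which share a vertex, so all bichromatic edges already form one connected graph. Hence $B(V_1,V_2)$ is automatically one component plus isolated vertices. The whole theorem is then to achieve, together with connectivity of both classes, two things: every vertex has a neighbour in the other class, and $B$ has at least $n$ edges. Equivalently, you need a cycle of length at least $n$ in the cubic dual $G^{*}$ that passes through a vertex of every face of $G^{*}$.

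Your arc choice fails the first requirement, as you note yourself. An interior vertex of the arc whose chords on both sides of $C$ stay in the arc is an isolated vertex, i.e.\ a tree component, of $B$. Nothing afterwards repairs this. In the argument as written, 4-connectivity is used only to obtain $C$. The property you attribute to it, that every vertex lies on triangles on both sides of $C$, holds for a Hamiltonian cycle of any triangulation.

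Neither remedy in Step~3 is an argument. Colouring non-contiguously along $C$ gives up the one property Step~2 secured, namely connectivity of each class along arcs of $C$. You offer no mechanism by which chords restore connectivity while also making every vertex adjacent across.

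For the contraction induction, a 4-connected triangulation has no separating triangles at all, so ``an edge not in a separating triangle'' is every edge. The relevant fact is that contracting $uv$ creates a separating triangle whenever $uv$ lies on a separating $4$-cycle, and every edge of the octahedron does. You would need the octahedron as a base case, plus a lemma that every other 4-connected triangulation has an edge whose contraction leaves a 4-connected triangulation. I believe this follows from known results on contractible edges (Martinov's characterization), but it has to be invoked.

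More seriously, the lifting step, where the proof would actually live, is absent.
\begin{itemize}
\item If $u$ and $v$ both inherit the class of the contracted vertex $w$, one of them may have no neighbour in the other class. Also $B$ may become a spanning tree: the bichromatic edges at $w$ are merely redistributed between $u$ and $v$, with no new ones unless an apex of the two faces at $uv$ lies in the other class, while the vertex count grows by one.
\item If $u$ and $v$ are put in different classes, one class may become disconnected.
\end{itemize}
A complete proof needs a case analysis of the colouring around $u$ and $v$, or a strengthened induction hypothesis, guaranteeing that some lift works. The proposal does not supply it.
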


Based on this theorem, Hasunuma~\cite{hasunuma01} conjectured that every $2k$-connected graph has $k$ CISTs.
However, P\'{e}terfalvi~\cite{peterfalvi12} disproved this conjecture by constructing a $k$-connected graph that does not have two CISTs for any $k \geq 2$.
In addition, P\'{e}terfalvi proved that the 4-connectivity of Theorem~\ref{thm:hasu} cannot be weakened; that is, there exists a 3-connected plane triangulation that does not contain two CISTs.
Wang and Liu~\cite{wang26} proved that there are infinitely many 4-connected planar graphs that do not have two CISTs.

Motivated by the above facts, we consider sufficient conditions for the existence of CISTs in 3-connected triangulated discs.
We first show that every 3-connected 2-outerplanar triangulated disc has two CISTs.
Then, in Section~\ref{sec:3op}, we provide sufficient conditions for a 3-connected 3-outerplanar graph to have two CISTs.
In Section~\ref{sec:4op}, we provide an example of a 3-connected 4-outerplanar triangulated disc that does not contain two CISTs.


\section{Preliminaries}
\label{sec:prelim}

Hasunuma~\cite{hasunuma01} introduced the notion of completely independent spanning trees and provided the following characterization.

\begin{thm}[\cite{hasunuma01}]
  \label{thm:ch01}
  Let $T_{1}, T_{2}, \dots, T_{k}$ be $k$ spanning trees of $G$.
  Then, $T_{1}, T_{2}, \dots, T_{k}$ are completely independent if and only if $T_{1}, T_{2}, \dots, T_{k}$ are edge-disjoint, and for any vertex $v$, there is at most one spanning tree $T_{i}$ such that $\deg_{T_{i}} v > 1$.
\end{thm}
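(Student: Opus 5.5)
We prove the two directions of Theorem~\ref{thm:ch01} separately. The only tool needed is the structure of paths in trees: in a tree the path between two vertices is unique, and a vertex of degree one in a tree can be an endpoint of a path but never an interior vertex.

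\emph{Sufficiency.} Assume the trees are edge-disjoint and every vertex is an inner vertex (degree greater than one) in at most one tree. Fix $u,v$, and let $P_i$ denote the $u$--$v$ path in $T_i$. The paths are pairwise edge-disjoint because the trees are.

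Suppose some vertex $w \neq u,v$ lies on both $P_i$ and $P_j$ with $i\neq j$. Then $w$ is an interior vertex of each path, so $\deg_{T_i} w \ge 2$ and $\deg_{T_j} w \ge 2$. This contradicts the hypothesis, so the paths are openly disjoint.

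\emph{Necessity.} Assume the trees are completely independent. We first show they are edge-disjoint. If an edge $xy$ lay in both $T_i$ and $T_j$, then the $x$--$y$ path in each of these trees would be the single edge $xy$. These two paths share an edge, so they are not openly disjoint, a contradiction.

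Next, suppose some vertex $w$ has $\deg_{T_i} w \ge 2$ and $\deg_{T_j} w\ge 2$ for some $i\neq j$. Choose two distinct $T_i$-neighbours $a,b$ of $w$. Then $a$ and $b$ lie in different components of $T_i - w$, so the $a$--$b$ path in $T_i$ is $a\,w\,b$, with $w$ interior.

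The main step is to find a pair of vertices whose paths pass through $w$ in both trees at once. To do this, consider the two forests $T_i-w$ and $T_j-w$.
\begin{itemize}
  \item Let $A_1,\dots,A_p$ be the vertex sets of the components of $T_i-w$, where $p\ge 2$.
  \item Let $B_1,\dots,B_q$ be the vertex sets of the components of $T_j-w$, where $q\ge 2$.
\end{itemize}
Both families partition $V(G)\setminus\{w\}$.

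We claim there exist vertices $x,y \neq w$ lying in different $A$-classes and also in different $B$-classes. Suppose not. Then any two vertices in different $A$-classes would share a $B$-class.

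Take $x\in A_1$. Every vertex outside $A_1$ lies in the same $B$-class as $x$, say $B_1$. Now take any other $x'\in A_1$ and any $z\in A_2$. The vertex $x'$ shares a $B$-class with $z$, and $z$ lies in $B_1$, so $x'\in B_1$ as well. Hence every vertex other than $w$ lies in $B_1$, which gives $q=1$, a contradiction.

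With such $x,y$ in hand, $w$ separates $x$ from $y$ in both $T_i$ and $T_j$. Therefore $w$ is an interior vertex of both $x$--$y$ paths, and these paths are not openly disjoint, a contradiction. This completes the proof.
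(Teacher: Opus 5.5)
The paper gives no proof of this statement. It quotes the result from Hasunuma~\cite{hasunuma01} as background, so there is no in-paper argument to compare yours against.

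Your proof is correct and complete on its own terms.
\begin{itemize}
\item \emph{Sufficiency:} a common vertex of two $u$--$v$ tree paths other than $u,v$ is interior to both paths. It therefore has degree at least $2$ in both trees, which the hypothesis forbids.
\item \emph{Necessity, edge-disjointness:} the single-edge $x$--$y$ path handles this, using the paper's definition of openly disjoint, which explicitly requires edge-disjointness.
\item \emph{Necessity, degree condition:} your claim that some pair $x,y$ lies in different components of both $T_i-w$ and $T_j-w$ is proved correctly. It is exactly what is needed, since $w$ is then an interior vertex of both $x$--$y$ paths.
\end{itemize}

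Two minor remarks. First, the choice of the neighbours $a,b$ and the path $a\,w\,b$ is never used afterwards and can be dropped. Second, the separation claim can be shown directly instead of by contradiction. Take $x\in A_1$ and $y\in A_2$. If they happen to share a $B$-class, pick any $z$ in a different $B$-class. Then $z$ differs in $A$-class from at least one of $x,y$, and that vertex together with $z$ is the required pair.
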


Let $(V_{1}, V_{2}, \dots, V_{k})$ be a partition of the vertex set $V(G)$, and for $i \neq j$, $B(V_{i}, V_{j}, G)$ be the bipartite graph with bipartition $V_{i} \cup V_{j}$ and edge set $\{uv \mid uv \in E(G), u \in V_{i}, v \in V_{j}\}$.
We may use $B(V_{i}, V_{j})$ instead of $B(V_{i}, V_{j}, G)$ if $G$ is clear from context.
A partition $(V_{1}, V_{2}, \dots, V_{k})$ is called a \emph{CIST-partition} of $G$ if it satisfies the following two conditions:
\begin{itemize}
\item[(1)] for $i = 1, 2, \dots, k$, the induced subgraph $G[V_{i}]$ is connected, and
\item[(2)] for any $i \neq j$, the bipartite graph $B(V_{i}, V_{j})$ has no tree components, that is, every connected component $H$ of $B(V_{i}, V_{j})$ satisfies $|E(H)| \geq |V(H)|$.
\end{itemize}
The author proved the following characterization of CISTs.

\begin{thm}[\cite{araki14}]
  \label{thm:araki}
  A connected graph $G$ has $k$ completely independent spanning trees if and only if there exists a CIST-partition $(V_{1}, V_{2}, \dots, V_{k})$ of $G$.
\end{thm}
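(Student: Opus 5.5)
The plan is to prove both directions by translating between a CIST-partition and the degree characterization in Theorem~\ref{thm:ch01}. For the direction from trees to a partition, the key observation is that in a set of CISTs each vertex is internal in at most one tree, so the internal vertices of the trees essentially partition $V(G)$. For the converse, we build each $T_{i}$ as a spanning tree of $G[V_{i}]$ and hang every other vertex on it as a leaf. To keep the trees edge-disjoint, the leaf edges will come from a suitable orientation of the bipartite graphs $B(V_{i},V_{j})$.

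\emph{Sufficiency.} Let $(V_{1},\dots,V_{k})$ be a CIST-partition.
\begin{enumerate}
\item Fix $i\neq j$ and a component $H$ of $B(V_{i},V_{j})$. Since $|E(H)|\geq |V(H)|$ and $H$ is connected, $H$ contains a spanning connected unicyclic subgraph.
\item Orient the cycle of this subgraph cyclically, and orient every other edge of it towards the cycle. Then every vertex of $H$ has exactly one out-edge, and distinct vertices have distinct out-edges.
\item For each $i$, take a spanning tree of the connected graph $G[V_{i}]$. For every $j\neq i$, attach each $v\in V_{j}$ to it by $v$'s out-edge in $B(V_{i},V_{j})$; this edge goes to a vertex of $V_{i}$. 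Call the resulting spanning tree $T_{i}$.
\end{enumerate}
The trees $T_{i}$ are pairwise edge-disjoint. Edges inside $V_{i}$ are used only by $T_{i}$. An edge $uv$ with $u\in V_{i}$ and $v\in V_{j}$ is the out-edge of exactly one of its endpoints. If it is $v$'s out-edge it lies only in $T_{i}$, and if it is $u$'s out-edge it lies only in $T_{j}$. Moreover, a vertex of $V_{j}$ is a leaf in every $T_{i}$ with $i\neq j$. Theorem~\ref{thm:ch01} then gives that $T_{1},\dots,T_{k}$ are CISTs.

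\emph{Necessity.} Let $T_{1},\dots,T_{k}$ be CISTs. The graphs with $|V(G)|\le 2$ are handled directly, so assume $|V(G)|\geq 3$; then every spanning tree has an internal vertex.
\begin{enumerate}
\item Let $V_{i}$ be the set of vertices internal in $T_{i}$. By Theorem~\ref{thm:ch01} these sets are disjoint. Put every vertex that is a leaf in all trees into $V_{1}$.
\item The internal vertices of a tree induce a subtree, so $G[V_{i}]$ is connected for each $i$. For $V_{1}$ this survives adding the all-leaf vertices, because each such vertex is adjacent in $T_{1}$ to an internal vertex of $T_{1}$.
\item Take $v\in V_{j}$ and $i\neq j$. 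Then $v$ is a leaf of $T_{i}$, and its unique $T_{i}$-neighbour is internal in $T_{i}$, hence lies in $V_{i}$. So $v$ has a designated edge $e(v)\in E(T_{i})\cap E(B(V_{i},V_{j}))$. Likewise each $u\in V_{i}$ has a designated edge $e(u)\in E(T_{j})\cap E(B(V_{i},V_{j}))$.
\end{enumerate}

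\emph{Injectivity of $e$.} The map $e$ is injective on $V(B(V_{i},V_{j}))$. Two vertices on the same side get distinct edges, since each edge has only one endpoint on that side. Vertices on opposite sides get edges from different trees, and these are distinct because the trees are edge-disjoint. Each $e(x)$ lies in the component containing $x$. Hence every component $H$ satisfies $|E(H)|\geq|V(H)|$, and $(V_{1},\dots,V_{k})$ is a CIST-partition.

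The main obstacle is the sufficiency direction. There one must choose the leaf edges so that no edge is claimed by two trees, and the unicyclic orientation is exactly where the no-tree-component hypothesis is used. The only other care needed is the degenerate small cases in the necessity direction.
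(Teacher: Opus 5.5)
Your proof is correct and is essentially the standard argument of \cite{araki14}, which this paper cites rather than reproves. For necessity you take the internal-vertex sets and an injective edge assignment $x \mapsto e(x)$; for sufficiency you extend spanning trees of $G[V_i]$ by leaf edges taken from an out-degree-one orientation of a spanning unicyclic subgraph of each component of $B(V_i,V_j)$, with one orientation fixed per unordered pair $\{i,j\}$.

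Two small corrections. First, an edge of $B(V_i,V_j)$ is the out-edge of \emph{at most} one endpoint, not exactly one, since edges outside the chosen unicyclic subgraphs lie in no tree; your argument only needs ``at most one''. Second, the case $|V(G)|=1$ with $k\geq 2$ cannot be ``handled directly'': $K_1$ vacuously has $k$ CISTs but admits no CIST-partition, so the statement tacitly assumes $|V(G)|\geq 2$.
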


Let $G$ be a $k$-outerplanar graph, and let us fix a $k$-outerplanar embedding of $G$.
Define $L_{k}$ as the set of vertices on the outer face, and define $L_{i}$ for $1 \leq i \leq k-1$ recursively as the set of vertices on the outer face of the plane graph obtained by removing the vertices in $L_{i+1} \cup \dots \cup L_{k}$.
We call $L_{i}$ the \emph{$i$-th layer} of $G$.

When $G$ is 3-connected and its inner faces are triangles, the following properties are satisfied:

\begin{prop}
  \label{p:one}
  The induced subgraph $G[L_{k}]$ is a chordless cycle of $ G$.
\end{prop}

\begin{prop}
  \label{p:two}
  $G - L_{k}$ is connected.
\end{prop}

\begin{prop}
  \label{p:four}
  For $1 \leq i \leq k-1$ and for any vertex $u \in L_{i}$, there exists a vertex $w \in L_{i+1}$ such that $uw \in E(G)$.
\end{prop}

\begin{prop}
  \label{p:three}
  For any edge $e = uv$ on $G[L_{k}]$, there exists a vertex $w \in L_{k-1}$ such that $u,v,w$ form a triangle.
\end{prop}


\section{2-Outerplanar triangulated discs}
\label{sec:2op}

Let $G$ be a 3-connected 2-outerplanar triangulated disc.
By Properties~\ref{p:one} and~\ref{p:two}, $G[L_{2}]$ is a chordless
cycle and $G[L_{1}]$ is connected.

\begin{thm}
  \label{thm:2out}
  Every 3-connected 2-outerplanar triangulated disc $G$ has two completely independent spanning trees.
  Furthermore, $G$ has a CIST-partition $(V_{1}, V_{2})$ such that $|V_{1} \cap L_{2}| = 1$.
\end{thm}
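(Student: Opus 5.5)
The plan is to take $V_{2} = L_{2} \setminus \{x\}$ and $V_{1} = L_{1} \cup \{x\}$ for a well-chosen vertex $x \in L_{2}$, and then verify that $(V_{1}, V_{2})$ is a CIST-partition. Theorem~\ref{thm:araki} then gives two CISTs, and $|V_{1} \cap L_{2}| = 1$ holds by construction.

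\emph{Condition (1).} By Property~\ref{p:one}, $G[L_{2}]$ is a chordless cycle, so $G[V_{2}]$ is a path and hence connected. By Property~\ref{p:two}, $G[L_{1}]$ is connected, and Property~\ref{p:four} (applied to the outer edge $xy$ through Property~\ref{p:three}) gives $x$ a neighbour in $L_{1}$. Hence $G[V_{1}]$ is connected.

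\emph{Condition (2), the component containing $V_{2}$.} Let $y$ and $z$ be the two neighbours of $x$ on the cycle $G[L_{2}]$. For every edge $uv$ of $G[L_{2}]$ with $u,v \neq x$, Property~\ref{p:three} gives an apex $w \in L_{1}$ of a triangle $uvw$, so $u$--$w$--$v$ is a path in $B(V_{1},V_{2})$. Concatenating these paths along the cycle from $y$ to $z$, and closing with $z$--$x$--$y$, produces a closed walk in $B(V_{1},V_{2})$. This walk uses the edge $xy$ exactly once, since $x$ is not one of the apexes. Therefore $xy$ lies on a cycle of $B(V_{1},V_{2})$. Consequently the component containing all of $V_{2}$ and $x$ is not a tree.

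\emph{Condition (2), the remaining vertices.} It remains to see that every $w \in L_{1}$ lies in this same component, i.e.\ that $w$ has a neighbour in $V_{2} = L_{2}\setminus\{x\}$. Otherwise $w$ would be an isolated vertex of $B(V_{1},V_{2})$, which is a tree component. I would first show that every $w \in L_{1}$ has at least one neighbour in $L_{2}$, as follows.
\begin{itemize}
\item If $|L_{1}| = 1$, this follows from 3-connectivity.
\item Otherwise, every vertex of $L_{1}$ lies on the outer face of the outerplanar graph $G - L_{2}$. Since $G[L_{1}]$ is connected, $w$ is incident to an edge on that outer boundary. The triangular face of $G$ on the outer side of this edge must contain a vertex of $L_{2}$.
\end{itemize}
So the only danger comes from vertices $w \in L_{1}$ whose unique $L_{2}$-neighbour is $x$. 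The whole proof therefore reduces to choosing $x$ so that no such $w$ exists.

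\emph{Main obstacle: choosing $x$.} Call $x$ \emph{bad} if some $w \in L_{1}$ has $N(w) \cap L_{2} = \{x\}$. By planarity, such a $w$ lies in the fan of $x$, strictly between the apexes of the triangles on the edges $yx$ and $xz$. The step I expect to be hardest is showing that not every vertex of $L_{2}$ is bad.

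The first approach I would try is a planarity argument around the cycle. I would walk along $L_{2}$ and track the sequence of apexes $w_{e} \in L_{1}$ of the edges $e$, aiming either for a vertex $x$ whose fan contains no private $L_{1}$-vertex or for a contradiction with 3-connectivity. One candidate contradiction: a private vertex for every $x$ would force $\{x, w_{e}\}$-type 2-cuts.

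If that fails, the fallback is to keep $x$ arbitrary and instead move each private vertex $w$ of $x$ into $V_{2}$ together with a suitable repair. I would then re-check connectivity of $G[V_{1}]$ and the cycle argument above in that modified setting.
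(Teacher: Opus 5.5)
\textbf{Where your argument stands.} Your reduction is sound. With $V_1=L_1\cup\{x\}$ and $V_2=L_2\setminus\{x\}$, both $G[V_1]$ and $G[V_2]$ are connected. The apex walk gives a cycle of $B(V_1,V_2)$ through $xy$. The only possible failure is a vertex $w\in L_1$ with $N_G(w)\cap L_2=\{x\}$. This is exactly the paper's Case~2 when $X=\emptyset$.

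\textbf{The gap.} The step you flag as hardest is false, so this route cannot be completed. Take the octahedron with outer triangle $x_1x_2x_3$, inner triangle $c_1c_3c_5$, and edges $x_1c_1,x_1c_3,x_2c_3,x_2c_5,x_3c_5,x_3c_1$. Insert new vertices $c_2$, $c_4$, $c_6$ into the faces $x_1c_1c_3$, $x_2c_3c_5$, $x_3c_5c_1$, respectively. The result is a 3-connected 2-outerplanar triangulated disc with $L_2=\{x_1,x_2,x_3\}$, and $N_G(c_{2j})\cap L_2=\{x_j\}$ for $j=1,2,3$. So every vertex of $L_2$ is bad, and no $\{x,w_e\}$-type 2-cut arises. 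Any proof must therefore put some vertices of $L_1$ into $V_2$.

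\textbf{Your fallback and the paper's proof.} The fallback is where the real content would have to go, and as stated it fails. A private vertex of $x$ has no neighbour in $L_2\setminus\{x\}$. If only the private vertices are moved into $V_2$, they form separate components of $G[V_2]$. The paper's Case~2 does exactly this: it sets $V_2=(L_2\setminus\{u_1\})\cup X$ and asserts that $G[V_2]$ is connected by Properties~\ref{p:one} and~\ref{p:two}. For the same reason, that assertion fails whenever $X\neq\emptyset$, and in the example above $X\neq\emptyset$ for every admissible choice of $u$ and $u_1$. So the paper does not supply the missing repair either.

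\textbf{What the repair needs.} In the example, what works is to move a non-private neighbour of the private vertex into $V_2$. For instance, $V_1=\{x_1,c_1,c_2,c_4,c_5,c_6\}$, $V_2=\{x_2,x_3,c_3\}$ is a CIST-partition with $|V_1\cap L_2|=1$. A general argument would have to choose such connector vertices in $L_1$ so that three things hold:
\begin{enumerate}
\item every private vertex of $x$ left in $V_1$ gets a neighbour in $V_2$;
\item $G[V_1]$ stays connected, which is delicate because connectors may be cut vertices of $G[L_1]$;
\item no vertex of $L_2\setminus\{x\}$ loses all of its neighbours in $V_1$.
\end{enumerate}
None of this is in your proposal.
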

\begin{proof}
  Let $G$ be a 3-connected 2-outerplanar triangulated disc and $L_{1}, L_{2}$ be the first and second layers of $G$, respectively.

  We consider two cases.

  \noindent
  (Case~1) $|L_{1}| = 1$.
  Let $L_{1} = \{u\}$ and $v \in L_{2}$ be a vertex adjacent to $u$.
  Define $V_{1} = \{u, v\}$ and $V_{2} = L_{2} \setminus \{v\}$.
  Clearly, $G[V_{1}]$ and $G[V_{2}]$ are connected.
  It is easy to see that $B(V_{1}, V_{2})$ is connected and not a tree.
  Hence, $(V_{1}, V_{2})$ is a CIST-partition of $G$ and $|V_{1} \cap L_{2}| = 1$.

  \begin{figure}[tbp]
    \centering
    \includegraphics{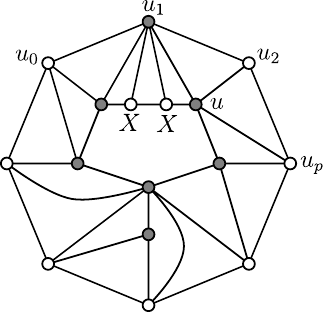}
    \caption{The situation of Case~2 of Theorem~\ref{thm:2out}.
      Some edges joining vertices of $L_{1}$ are omitted.
      The black and white vertices are members of $V_{1}$ and $V_{2}$, respectively.
      The vertices labeled by $X$ are members of $X$.}
    \label{fig:th31}
  \end{figure}

  \noindent
  (Case~2) $|L_{1}| \geq 2$.
  In this case, there exists a vertex $u \in L_{1}$ such that $|N_{G}(u) \cap L_{2}| \geq 2$.
  Let $N_{G}(u) \cap L_{2} = \{u_{1},u_{2},\dots,u_{p}\}$, $p \geq 2$, and $u_{1}, u_{2}, \dots, u_{p}$ be arranged clockwise on the chordless cycle $G[L_{2}]$.
  Let $X = \{v \mid v \in L_{1}, N_{G}(v) \cap L_{2} = \{u_{1}\}\}$.
  Define $V_{1} = \{u_{1}\} \cup (L_{1} \setminus X)$ and $V_{2} = (L_{2} \setminus \{u_{1}\}) \cup X$ (see Fig.~\ref{fig:th31}).

  By Properties~\ref{p:one} and \ref{p:two}, the subgraphs $G[V_{1}]$ and $G[V_{2}]$ are connected.
  Consider the bipartite graph $B = B(V_{1}, V_{2})$.
  To prove $(V_{1}, V_{2})$ is a CIST-partition, we show that $B$ is a connected subgraph with a cycle.

  Let $w$ be any vertex of $G$.
  We show that there exists a $u_{1}$--$w$ walk in $B$.
  Let $u_{0} \in L_{2}$ be the vertex immediately before $u_{1}$ in the clockwise direction.
  Since $u_{0}u_{1}, u_{1}u_{2} \in E(B)$ and $uu_{2},\dots,uu_{p} \in E(B)$, there is a $u_{1}$--$w$ walk in $B$ for $w \in \{u, u_{0}, u_{2}, \dots, u_{p}\}$.

  Consider the case where $w \in V_{2}$ and $w \notin \{u_{0}, u_{1}, \dots, u_{p}\}$.
  Let $P=(u_{p}=x_{0}, x_{1}, \dots, x_{q} = w)$ be a $u_{p}$--$w$ path on $G[V_{2}]$ such that $P$ does not pass through $u_{1}$.
  By Property~\ref{p:three}, for each edge $x_{i} x_{i+1}$, there exists $y_{i} \in V_{1} \cap L_{1}$ such that $x_{i}y_{i}, y_{i}x_{i+1} \in E(B)$.
  Hence we obtain a $u_{p}$--$w$ walk $W = (u_{p} = x_{0}, y_{0}, x_{1}, \dots, x_{q-1}, w_{q-1}, x_{q} = w)$.
  We obtain a $u_{1}$--$w$ walk in $B$ from $W$ and the path $(u_{1}, u_{2}, u, u_{p})$.

  Next, we suppose that $w \in L_{1}$.
  If $w \in X$, then $w$ is adjacent to $u_{1}$ in $B$.
  If $w \notin X$, by Property~\ref{p:four}, there exists a vertex $w' \in V_{2}$ such that $ww' \in E(B)$.
  By a similar argument, there is a $u_{1}$--$w'$ walk $W$ on $B$, and thus we obtain a $u_{1}$--$w$ walk on $B$ from $W$ and the edge $ww'$.

  From the above discussion, we see that $B$ is connected.
  The bipartite graph $B$ has a $u_{0}$--$u_{p}$ path $P$ that does not contain vertices $u_{1}, \dots, u_{p-1}$.
  We obtain a cycle of $B$ from $P$ and the path $(u_{0}, u_{1}, u_{2}, u, u_{p})$.

  From the above argument, $(V_{1}, V_{2})$ is a CIST-partition of $G$ and $|V_{1} \cap L_{2}| = 1$.
\end{proof}

Not every 2-connected 2-outerplanar triangulated disc has two CISTs.
For example, the graph $G$ shown in Fig.~\ref{fig:noCISTs} does not have two CISTs.
Assume, to the contrary, that the graph $G$ in Fig.~\ref{fig:noCISTs} has a CIST-partition $(V_{1}, V_{2})$.
By the definition of the CIST-partition, every vertex in $V_{1}$ is adjacent to some vertex in $V_{2}$, and every vertex in $V_{2}$ is adjacent to some vertex in $V_{1}$.
Thus, $u, v$ and $w$ cannot all be in $V_{1}$.
Without loss of generality, we may assume that $u, v \in V_{1}$ and $w \in V_{2}$.
Then, at least one of $x$ or $y$ is in $V_{2}$, but $G[V_{2}]$ is not connected, because there is no path between $w$ and $x$ or $y$ in $G[V_{2}]$.
This is a contradiction.

\begin{figure}[tbp]
  \centering
  \includegraphics{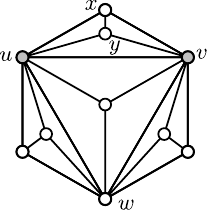}
  \caption{An example of a 2-connected 2-outerplanar triangulated disc $G$ that has no CISTs.}
  \label{fig:noCISTs}
\end{figure}

We provide a sufficient condition for a 2-connected 2-outerplanar triangulated disc to have two CISTs.
A 1-outerplanar triangulated disc (or maximal outerplanar graph) is \emph{striped} if every inner triangle shares an edge with the outer face.

\begin{lem}
  \label{lem:2-color}
  Let $G$ be a striped 1-outerplanar triangulated disc of $n \geq 3$ vertices.
  Then, the vertex set of $G$ has a partition $(U_{1}, U_{2})$ such that $U_{1} \neq \emptyset$ and $U_{2} \neq \emptyset$ and every inner triangle contains either exactly one vertex in $U_{1}$ or exactly one vertex in $U_{2}$.
  Furthermore, the vertices in $U_{1}$ are arranged consecutively on the outer face of $G$.
\end{lem}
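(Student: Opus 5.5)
Since $G$ is striped, every inner triangle has an edge on the outer cycle, so I would first show that the weak dual of $G$ (inner triangles adjacent when they share a chord) is a path, not merely a tree. A triangle with all three edges being chords would have no outer edge, which the striped condition forbids. So every triangle has at most two chords, hence at most two neighbours in the weak dual, and the dual tree is a path $t_{1}, t_{2}, \dots, t_{n-2}$. Consecutive triangles $t_{i}, t_{i+1}$ share a chord $c_{i}$. For $1 < i < n-2$, the triangle $t_{i}$ consists of the two chords $c_{i-1}, c_{i}$, which meet at a common vertex, together with one outer edge. The endpoints of $c_{i}$ thus form two sequences that sweep along the two sides of the outer cycle. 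Let the outer cycle be $a_{1}, a_{2}, \dots, a_{r}, b_{s}, \dots, b_{1}$. Here $t_{1}$ contains an ear vertex, $t_{n-2}$ contains the other ear vertex, and the $a$'s and $b$'s are the two boundary paths between the ears.

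Next I would choose $U_{1}$ to be a consecutive arc of the outer cycle and check the condition. The natural candidate is the ``upper'' boundary path $\{a_{1}, \dots, a_{r}\}$, with $U_{2}$ the lower one $\{b_{1}, \dots, b_{s}\}$, after splitting the ear vertices appropriately: each ear is assigned to one side, so both sides are nonempty arcs. Every middle triangle $t_{i}$ has exactly one outer edge, and that edge lies either on the $a$-side or on the $b$-side. Its third vertex is on the opposite side, since it is the common endpoint of $c_{i-1}$ and $c_{i}$ and both chords join the two sides. So each such triangle has two vertices on one side and one on the other, which is exactly the requirement. Note that "exactly one in $U_{1}$ or exactly one in $U_{2}$" just means the triangle is not monochromatic.

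It remains to handle the end triangles $t_{1}$ and $t_{n-2}$, which is where the choice of ears matters. $t_{1}$ consists of an ear vertex $e$ with two outer edges and the chord $c_{1}$, whose endpoints lie one on each side. Whichever side $e$ is assigned to, $t_{1}$ contains one vertex of each side plus $e$, so it is not monochromatic. The same holds for $t_{n-2}$. The degenerate case $n=3$ (a single triangle) is handled directly by taking $U_{1}$ to be one vertex.

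The main obstacle I expect is making the ``two sides'' rigorous: showing that the chord endpoints really are monotone along two complementary arcs, so that each chord joins the two arcs. I would prove this by induction on $n$, removing the ear vertex of $t_{1}$. The result is again a striped maximal outerplanar graph with one fewer vertex, and adding the ear back extends one of the two arcs by one vertex while keeping consecutiveness. This induction also gives the ``furthermore'' clause directly, since $U_{1}$ is by construction a contiguous arc of the outer cycle.
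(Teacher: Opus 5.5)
Your proposal is correct and is essentially the paper's proof. The paper takes the two nonadjacent degree-$2$ vertices $u_1,u_k$, sets $U_1=\{u_1,\dots,u_{k-1}\}$ (one ear plus the arc up to the other ear), and observes that every chord joins $U_1$ to $U_2$, so no inner triangle is monochromatic. Your weak-dual-path argument and your check that each chord crosses between the two arcs justify exactly the two facts the paper asserts without proof: that there are exactly two ears, and that every chord separates them.
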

\begin{proof}
  If $n=3$, then the lemma holds.
  For $n \geq 4$, it is known that $G$ has exactly two nonadjacent vertices of degree 2.
  Let $u_{1}, u_{2}, \dots, u_{n}$ be the vertices of $G$ that are arranged in the clockwise direction on the outer face of $G$.
  Let $\deg_{G} u_{1} = \deg_{G} u_{k} = 2$ for some $3 \leq k \leq n-1$.
  Define $U_{1} = \{u_{1}, u_{2}, \dots, u_{k-1}\}$ and $U_{2} = V(G) \setminus U_{1}$.
  The partition $(U_{1}, U_{2})$ has the desired property since any chord of $G$ joins a vertex in $U_{1}$ and $U_{2}$.
  See Fig.~\ref{fig:th33}.
\end{proof}

\begin{figure}[tbp]
  \centering
  \includegraphics{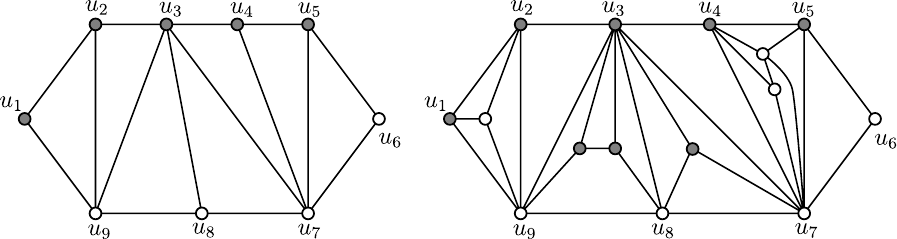}
  \caption{(Left) An example of a striped 1-outerplanar triangulated disc for Lemma~\ref{lem:2-color}.
    (Right) An example of a 2-connected 2-outerplanar triangulated disc for Theorem~\ref{thm:2-c-2-out}.
    The black and white vertices are members of $V_{1}$ and $V_{2}$, respectively.}
  \label{fig:th33}
\end{figure}

\begin{thm}
  \label{thm:2-c-2-out}
  Let $G$ be a 2-connected 2-outerplanar triangulated disc.
  If $G[L_{2}]$ is a striped 1-outerplanar triangulated disc and $G[L_{1}]$ is a forest, then $G$ has two completely independent spanning trees.
  Furthermore, $G$ has a CIST-partition $(V_{1}, V_{2})$ such that the vertices of $V_{1}$ are arranged consecutively in $G[L_{2}]$.
\end{thm}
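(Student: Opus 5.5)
The plan is to build the CIST-partition directly from the $2$-coloring of Lemma~\ref{lem:2-color} and then check the two conditions of Theorem~\ref{thm:araki}.

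\textbf{Setup.} Apply Lemma~\ref{lem:2-color} to the striped disc $G[L_{2}]$. This gives a partition $(U_{1}, U_{2})$ of $L_{2}$ with three properties:
\begin{itemize}
\item $U_{1}$ is consecutive on the outer cycle;
\item every chord of $G[L_{2}]$ joins $U_{1}$ to $U_{2}$;
\item every inner triangle $abc$ of $G[L_{2}]$ has exactly one vertex in one class and two in the other. Call the lone vertex the \emph{apex} of the triangle.
\end{itemize}
Since $G$ is a $2$-outerplanar triangulated disc, every vertex of $L_{1}$ lies strictly inside some inner triangle of $G[L_{2}]$. Moreover, the vertices inside a fixed triangle $abc$, together with $a,b,c$, induce a triangulated disc $G_{abc}$ whose outer face is $abc$. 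Because $G[L_{1}]$ is a forest, each such interior set induces a forest $F_{abc}$.

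\textbf{Definition of the partition.} Put $U_{i} \subseteq V_{i}$. For each triangle $abc$ with apex $a$, place the interior vertices of $abc$ into the class of the apex $a$. Then $V_{1}$ is consecutive on $G[L_{2}]$, as required.

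\textbf{Connectivity of $G[V_{1}]$ and $G[V_{2}]$.} The part of $V_{i}$ in $L_{2}$ is a subpath of the outer cycle, hence connected. So it suffices that every component of $F_{abc}$ is joined to the apex $a$ in $G[V_{i}]$. I would argue this inside $G_{abc}$ as follows.
\begin{itemize}
\item The inner faces of $G_{abc}$ incident with $a$ form a fan around $a$ from $ab$ to $ac$.
\item The neighbours of $a$ inside $abc$ form a walk from $b$ to $c$ through interior vertices. If there were no such interior neighbour, then $bc$ would be a face edge and the triangle would have no interior vertex.
\item Consider any interior component $C$ not adjacent to $a$. Its boundary in the triangulated disc consists of vertices that are either interior or among $b,c$. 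Using $2$-connectivity and triangulation, I expect to derive a contradiction: for instance, a cycle in $G[L_{1}]$ surrounding $C$, which contradicts the forest hypothesis.
\end{itemize}
This is the step I expect to be the main obstacle. If it fails, I would refine the assignment: give each interior component the colour of the apex when it touches $a$, and the colour of $b,c$ otherwise. Such a component touches $b$ or $c$ because the interior is triangulated against the outer triangle.

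\textbf{No tree components in $B(V_{1},V_{2})$.} Every edge of $B$ lies on some inner face of $G$, and every inner face is a triangle that is not monochromatic. Each such face contributes two $B$-edges sharing a vertex. In each outer triangle $abc$ with apex $a$, there is an interior vertex $x \in V(a)$ adjacent to both $b$ and $c$: take the interior vertex of the face on the edge $bc$, by Property~\ref{p:three} applied inside $G_{abc}$. This gives the $4$-cycle $a,b,x,c$ in $B$ when $x$ exists. When $abc$ has no interior vertex, $ab$ and $ac$ are joined by chords through the neighbouring triangles.

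To finish, I would show that $B$ is connected. I would follow the walk-building argument of Theorem~\ref{thm:2out}: each $L_{1}$ vertex has a $B$-neighbour by Property~\ref{p:four}, and consecutive outer triangles share a bichromatic chord. Then the existence of a single cycle suffices. The only degenerate case is when $G[L_{2}]$ is a single triangle, and there the $4$-cycle above exists as long as $L_{1} \neq \emptyset$.
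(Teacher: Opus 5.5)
Your partition is exactly the paper's: colour $L_{2}$ by Lemma~\ref{lem:2-color} and give every $L_{1}$ vertex inside a triangle the colour of its apex. Your $4$-cycle $a,b,x,c$ is also the paper's cycle. However, the step you flag as the main obstacle is left open, and connectivity of $G[V_{i}]$ rests entirely on it.

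\textbf{Connectivity of $G[V_{i}]$.} The mechanism you suggest is misdirected, because the forest hypothesis plays no role in this step. Your fallback recolouring would also break your later steps. The vertex $x$ on the face at $bc$ could be recoloured, which kills the $4$-cycle. A recoloured component could also lose all its $B$-neighbours. The missing idea is that $G_{abc}$ is a plane triangulation on at least four vertices, hence $3$-connected. Suppose a component $C$ of $F_{abc}$ were not adjacent to $a$. Then every neighbour of $C$ outside $C$ would lie in $\{b,c\}$, so $\{b,c\}$ would separate $C$ from $a$ in $G_{abc}$, a contradiction. The paper expresses this as: each inner triangle of $G[L_{2}]$ contains exactly one component of $G[L_{1}]$, and that component touches the apex.

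\textbf{Connectivity of $B$.} Your argument here does not work as stated. Property~\ref{p:four} is formulated for $3$-connected $G$, and here $G$ is only $2$-connected. In any case it only supplies some neighbour in $L_{2}$, which may be the apex $a$ of the same colour, so it yields no edge of $B$. Your remark that no inner face is monochromatic is also false: a face $att'$ with $tt'$ an edge of $F_{abc}$ is monochromatic. You do not actually use that remark.

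This is where the forest hypothesis is genuinely needed. For an interior vertex $t$, its neighbours in rotation form a cycle of length $\deg t \geq 3$. Consecutive vertices on this cycle span a face with $t$. So no two consecutive ones lie in $L_{1}$, since otherwise $G[L_{1}]$ would contain a triangle. Hence $t$ has at least two distinct neighbours among $a,b,c$, and in particular one in $\{b,c\}$. That neighbour is in the opposite class and belongs to the spanning tree $B[L_{2}]$, formed by the chords together with the two cycle edges joining $U_{1}$ to $U_{2}$.

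With these two facts your outline is complete. Note that the paper's written proof also passes over this second point without comment.
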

\begin{proof}
  Assume that $G$ is a 2-connected 2-outerplanar triangulated disc, $G[L_{2}]$ is a striped 1-outerplanar triangulated disc, and $G[L_{1}]$ is a forest.
  By Lemma~\ref{lem:2-color}, $L_{2}$ has a partition $(U_{1}, U_{2})$ such that $U_{1} \neq \emptyset$ and $U_{2} \neq \emptyset$ and every triangle of $G[L_{2}]$ contains either exactly one vertex in $U_{1}$ or exactly one vertex in $U_{2}$.
  Furthermore, the vertices in $U_{1}$ and $U_{2}$ are arranged consecutively on the outer face of $G$.

  Since $G[L_{1}]$ is a forest and every inner face of $G[L_{2}]$ is a triangle, every inner triangle of $G[L_{2}]$ contains at most one component of $G[L_{1}]$.
  We construct a partition $(V_{1}, V_{2})$ of $V(G)$ as follows:
  \begin{enumerate}
  \item $V_{i}$ contains every vertex of $U_{i}$ for $i = 1, 2$.
  \item If a component $T$ of $G[L_{1}]$ is contained in a triangle of $G[L_{2}]$ and the triangle has exactly one vertex in $U_{i}$, then every vertex of $T$ is in $V_{i}$.
  \end{enumerate}
  We show that $(V_{1}, V_{2})$ is a CIST-partition of $G$.

  Since the vertices in $U_{1}$ and $U_{2}$ are arranged consecutively on the outer face of $G$, $G[U_{1}]$ and $G[U_{2}]$ are connected.
  Let $T$ be a component of $G[L_{1}]$ and $C$ be the triangle of $G[L_{2}]$ that contains $T$.
  Assume that the vertices of $T$ are members of $V_{i}$.
  Because every face is a triangle, there is at least one edge between a vertex of $T$ and a vertex of $U_{i} \cap V(C)$.
  Hence, $G[V_{i}]$ is connected for $i = 1, 2$.

  Let $B = B(V_{1}, V_{2})$.
  We show that $B$ is a connected bipartite graph with cycles.
  It is easy to see that $B[L_{2}]$ is a tree and hence connected.
  Let $T$ be a component of $G[L_{1}]$ and $C$ be the triangle of $G[L_{2}]$ that contains $T$.
  Let $V(C) = \{x, y, z\}$, $x \in V_{i}$ and $y, z \notin V_{i}$.
  Therefore, the vertices of $T$ are members of $V_{i}$.
  Since $G$ is a triangulated disc, there is a vertex $w$ of $T$ such that $y, z, w$ form a triangle.
  Hence, the edges $xy, yw, wz$ and $zx$ form a cycle of $B$.

  Therefore, $(V_{1}, V_{2})$ is a CIST-partition of $G$.
\end{proof}


\section{3-Outerplanar graphs}
\label{sec:3op}

Suppose that $G$ is a $k$-outerplanar triangulated disc, and $L_{1}, L_{2}, \dots, L_{k}$ are its $k$ layers.
For $1 \leq i \leq k$, let $G_{i}$ be the subgraph induced by $L_{1} \cup L_{2} \cup \dots \cup L_{i}$.
From the definition, $G_{i}$ is an $i$-outerplanar triangulated disc and $G_{k} = G$.

\begin{lem}
  \label{lem:extend}
  Let $2 \leq i \leq k-1$.
  Suppose that $G_{i}$ is 2-connected and has a CIST-partition $(V_{1}, V_{2})$ and $G[L_{i+1}]$ is a chordless cycle.
  If $1 \leq |L_{i} \cap V_{1}| < |L_{i}|$ and the vertices in $L_{i} \cap V_{1}$ are arranged consecutively on $G[L_{i}]$, then $G_{i+1}$ has a CIST-partition $(V'_{1}, V'_{2})$ such that $1 \leq |L_{i+1} \cap V'_{2}| < |L_{i+1}|$ and the vertices in $L_{i+1} \cap V'_{2}$ are arranged consecutively on $G[L_{i+1}]$.
\end{lem}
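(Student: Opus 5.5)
We extend the partition by placing all of the new outer cycle $L_{i+1}$ into $V_{2}$, except for one consecutive arc that we put into $V_{1}$. Write $A = L_{i} \cap V_{1}$ and $A' = L_{i} \cap V_{2}$. By hypothesis both are nonempty, and each is a consecutive arc of the cycle bounding the outer face of $G_{i}$. The faces of $G_{i+1}$ between $G[L_{i}]$ and the chordless cycle $G[L_{i+1}]$ are triangles. Each such triangle is either an edge of $L_{i+1}$ plus an apex in $L_{i}$ (Property~\ref{p:three}), or an edge of $L_{i}$ plus an apex in $L_{i+1}$. As we go around the annulus, the apices on $L_{i}$ of the first kind appear in cyclic order. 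So the vertices of $L_{i+1}$ split into a consecutive arc $S_{A}$, whose neighbours in $L_{i}$ (other than at the two boundary positions) lie in $A$, and a complementary arc whose neighbours lie in $A'$. There are two transition positions $t_{1}, t_{2} \in L_{i+1}$, where the fan of $L_{i+1}$-neighbours switches from $A'$ to $A$ and back.

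The construction is as follows. Pick a vertex $t \in L_{i+1}$ adjacent to a vertex of $A$; by the fan structure we may take $t$ adjacent both to a vertex of $A$ and to a vertex of $A'$. We then set
\[
V'_{1} = V_{1} \cup \{t\}, \qquad V'_{2} = V_{2} \cup (L_{i+1} \setminus \{t\}),
\]
mirroring Case~2 of Theorem~\ref{thm:2out}. Then $L_{i+1} \cap V'_{2}$ is a consecutive arc of size $|L_{i+1}| - 1$, so the required inequalities hold provided $|L_{i+1}| \geq 3$. This is true because $L_{i+1}$ induces a chordless cycle.

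Connectivity of the parts is easy. $G[V'_{1}]$ is connected because $t$ has a neighbour in $A \subseteq V_{1}$. $G[V'_{2}]$ is connected because $L_{i+1} \setminus \{t\}$ is a path in the cycle and some vertex of it is adjacent to $A'$. To see the latter, note that the neighbour of $t$ on the cycle, taken on the $A'$ side, shares a triangle with an $A'$ vertex; if necessary we choose $t$ so that this holds.

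For condition (2) we must show that every component of $B' = B(V'_{1}, V'_{2}, G_{i+1})$ contains a cycle. The old graph $B(V_{1}, V_{2}, G_{i})$ is a subgraph of $B'$, and every one of its components already contains a cycle. So it suffices to show that every new vertex of $L_{i+1} \setminus \{t\}$, and $t$ itself, is joined in $B'$ to some old component. For $x \in L_{i+1} \setminus \{t\}$ with a neighbour in $A$, the edge to that neighbour is in $B'$. For $x$ whose $L_{i}$-neighbours all lie in $A'$, we use the path of $L_{i+1} \setminus \{t\}$ together with Property~\ref{p:three}, as in Theorem~\ref{thm:2out}. Each edge $x_{j}x_{j+1}$ of that path has an apex $y_{j} \in L_{i}$. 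If $y_{j} \in A$, the walk $x_{j}y_{j}x_{j+1}$ stays in $B'$. If $y_{j} \in A'$ there is no $B'$ edge at that step, and we must instead walk along the arc toward the $A$ side until some vertex has an $A$-neighbour, which happens at the transition position. Finally, $t$ is joined to an old component via its $A'$ neighbour, and also via the two cycle edges to its $L_{i+1}$-neighbours, which lie in $V'_{2}$.

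The main obstacle is precisely these stretches of $L_{i+1}$ whose $L_{i}$-neighbours are all in $A'$. Such a vertex is adjacent only to $V'_{2}$ vertices, apart possibly from $t$, so it could even be isolated in $B'$. A single vertex $t$ is therefore not enough, and the plan has to be adjusted: put into $V'_{1}$ the whole arc $S_{A'}$ of $L_{i+1}$-vertices that see only $A'$, together with one transition vertex on each side, and put the rest, the arc seeing $A$, into $V'_{2}$. Then every new $V'_{1}$ vertex has $B'$-edges to $A'$, and every new $V'_{2}$ vertex has $B'$-edges to $A$. The new set $V'_{1} \cap L_{i+1}$ is attached to $A'$ rather than to $A$, so to keep $G[V'_{1}]$ connected we need one vertex of it adjacent to $A$; the transition vertices supply this. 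The $V'_{2}$ arc is consecutive as required. What remains is to check two things: that both arcs are nonempty, using $|A|, |A'| \geq 1$ and the fact that every vertex of $L_{i}$ has a neighbour in $L_{i+1}$ (Property~\ref{p:four}); and that each new component of $B'$ contains a cycle. For the latter we use a triangle $x\,y\,z$ with $x, y$ adjacent on the cycle and in opposite parts at a transition position, together with the corresponding quadrilateral formed with an $L_{i}$-apex, exactly as in the proof of Theorem~\ref{thm:2-c-2-out}.
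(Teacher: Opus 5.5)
Your adjusted construction has a genuine gap: with both transition vertices in $V'_1$, the new part of $V'_2$ is disconnected from $V_2$.

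\textbf{Condition~(1) fails for $V'_2$.} You put $S_{A'}$ together with \emph{both} transition vertices $t_1,t_2$ into $V'_1$. So $L_{i+1}\cap V'_2$ consists only of the vertices strictly between $t_1$ and $t_2$ on the $A$-side. Every edge of the outer cycle of $G_i$ lies in exactly one triangle of the annulus. Hence $t_1$ and $t_2$, the apices over the two edges joining $A$ to $A'$, are the only vertices of $L_{i+1}$ with neighbours in both $A$ and $A'$. Every vertex strictly between them therefore has all its $L_i$-neighbours in $A\subseteq V_1$. Its other neighbours in $G_{i+1}$ lie on that arc or are $t_1,t_2\in V'_1$. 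So whenever this arc is nonempty, it is a separate component of $G_{i+1}[V'_2]$. You checked that the new $V'_1$-arc needs a vertex adjacent to $A$, but not the symmetric fact that the new $V'_2$-arc needs a vertex adjacent to $A'$. The repair is to give one transition vertex to each class. In the notation of the paper's proof, let $N_U=\{v_1,\dots,v_d\}$ be the vertices of $L_{i+1}$ adjacent to $A$. Put $v_1$ (adjacent to $u_1\in V_1$) into $V'_1$ and $v_2,\dots,v_d$ into $V'_2$. Then $v_d$, being adjacent to $u_{c+1}\in V_2$, attaches the new $V'_2$-arc to $V_2$. This is the paper's Case~2.

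\textbf{Nonemptiness fails in degenerate cases.} Your claim that both arcs are nonempty does not follow from $|A|,|A'|\ge 1$ and Property~\ref{p:four}; these give only $|N_U|\ge 1$. If $|N_U|=1$, the two transition positions coincide, and no arc of vertices seeing $A$ is left for $V'_2$. This happens, for instance, when $A=\{u_1\}$ (as produced by Theorem~\ref{thm:2out}) and $u_1$ has a single neighbour on $L_{i+1}$. In fact your version leaves $L_{i+1}\cap V'_2$ empty whenever $|N_U|\le 2$. This case needs its own construction, as in the paper's Case~1. Put all of $L_{i+1}$ except one cycle-neighbour $v_2$ of $v_1$ into $V'_1$, and put $v_2$, which sees only $A'$, into $V'_2$. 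Then $v_2$ has no $B'$-edge into $L_i$, but it reaches the old part of $B'$ through $v_1$. The mirror situation, where a single vertex of $L_{i+1}$ sees all of $A'$, is handled the same way with the roles swapped.

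\textbf{What works.} Once these points are fixed, your treatment of condition~(2) is correct and shorter than the paper's bookkeeping with $X$ and $B_i$. Since $B(V_1,V_2,G_i)$ is a subgraph of $B'$ whose components all contain cycles, it suffices that every new vertex reaches $L_i$ in $B'$. Under the corrected assignment, every new vertex except the single exceptional one in the degenerate cases has a direct $B'$-edge into $L_i$.
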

\begin{proof}
  Let $L_{i} = \{u_{1}, u_{2}, \dots, u_{p}\}$, and assume $u_{1}, u_{2}, \dots, u_{p}$ are arranged in the clockwise direction on $G[L_{i}]$.
  Assume that $(V_{1}, V_{2})$ is a CIST-partition of $G_{i}$ such that $U = L_{i} \cap V_{1} = \{u_{1}, u_{2}, \dots, u_{c}\}$ for some $1 \leq c < p$.
  Define $N_{U} = N_{G}(U) \cap L_{i+1}$.
  Assume that $G[L_{i+1}]$ is a chordless cycle of $G$ and $L_{i+1} = \{v_{1}, v_{2}, \dots, v_{q}\}$ and $v_{1}, v_{2}, \dots, v_{q}$ are arranged in the clockwise direction on $G[L_{i+1}]$.
  Without loss of generality, we may assume that $N_{U} = \{v_{1}, v_{2}, \dots, v_{d}\}$.

  \vspace{1em}
  \noindent
  Case 1. $|N_{U}| = 1$.

  First, we consider the case of $N_{U} = \{v_{1}\}$.
  Define $V'_{1} = (L_{i+1} \setminus \{v_{2}\}) \cup V_{1}$ and $V'_{2} = \{v_{2}\} \cup V_{2}$.
  This is illustrated in Fig.~\ref{fig:lem41}(a).
  We show that $(V'_{1}, V'_{2})$ is a CIST-partition of $G_{i+1}$.
  It is easy to see that $G_{i+1}[V'_{1}]$ is connected.
  By Property~\ref{p:three}, vertex $v_{2} \in V'_{2}$ is adjacent to some vertex $u' \in L_{i}$.
  Since $N_{U} = \{v_{1}\}$, it holds $u' \notin U$ and hence $u' \in V'_{2}$.
  Thus, $G_{i+1}[V'_{2}]$ is connected.

  Next, we show that $B = B(V'_{1}, V'_{2}, G_{i+1})$ does not have a tree component.
  Let
  \begin{align*}
    X &= \{u_{j} \mid 2 \leq j \leq c-1 \text{ and  } N_{G}(u_{j}) \cap
        L_{i+1} = \{v_{1}\}\} \\
      &\cup
    \{u_{j} \mid c+1 \leq j \leq q \text{ and } N_{G}(u_{j}) \cap L_{i+1} = \{v_{2}\}\}.
  \end{align*}
  Since $(V_{1}, V_{2})$ is a CIST-partition of $G_{i}$, the components of $B(V_{1}, V_{2}, G_{i})$ that contain vertices of $X$ have cycles in them.
  Let $L'_{i} = L_{i} \setminus X$.
  Note that every vertex in $L'_{i}$ is adjacent to some vertex in $V'_{1} \cap L_{i+1}$.
  Define $B_{i} = B[L'_{i} \cup L_{i+1}]$.
  We show that $B_{i}$ is a connected graph with a cycle.
  Since $N_{U} = \{v_{1}\}$ and $G_{i+1}$ is a triangulated disc, $v_{1}$ is adjacent to $u_{c+1}$ and $u_{p}$.

  \begin{figure}[tbp]
    \centering
    \includegraphics{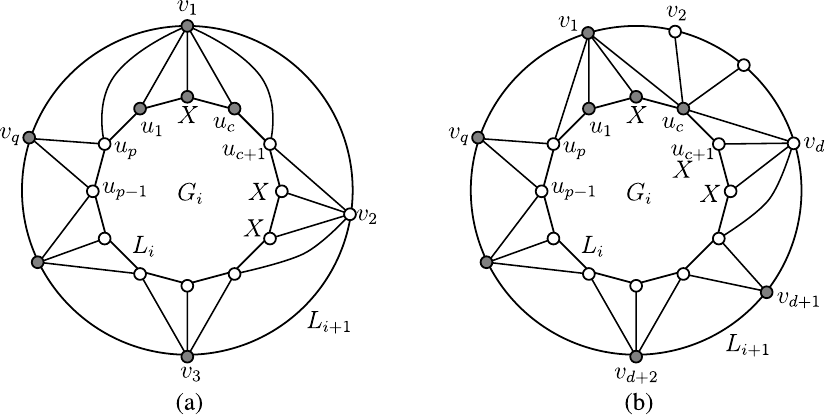}
    \caption{The situation of (a) Case~1 and (b) Case~2 of Lemma~\ref{lem:extend}.
      The black and white vertices are members of $V'_{1}$ and $V'_{2}$, respectively.
      The vertices labeled by $X$ are members of $X$.}
    \label{fig:lem41}
  \end{figure}

  First, we prove that $B_{i}$ is connected by showing that there is a $v_{1}$--$y$ walk in $B_{i}$ for any $y \in L'_{i} \cup L_{i+1}$.
  Let $y = v_{j} \in L_{i+1}$.
  Since $v_{1}v_{2}, v_{2}v_{3} \in E(B_{i})$, there exists a $v_{1}$--$v_{j}$ walk in $B_{i}$ for $j = 2, 3$.
  Let $4 \leq j \leq q$ and $P=(v_{3}, v_{4}, \dots, v_{j})$ be a $v_{3}$--$v_{j}$ path on $G[L_{i+1}]$.
  By Property~\ref{p:three}, for each edge $v_{t} v_{t+1}$ for $t \geq 3$, there exists $w_{t} \in V'_{2} \cap L'_{i}$ such that $v_{t}w_{t}, v_{t+1}w_{t} \in E(B_{i})$.
  We obtain a $v_{3}$--$v_{j}$ walk $W = (v_{3}, w_{3}, v_{4}, \dots, v_{j-1}, w_{j-1}, v_{j})$.
  Thus we obtain a $v_{1}$--$v_{j}$ walk in $B_{i}$ from $W$ and edges
  $v_{1}v_{2}, v_{2}v_{3}$.
  Let $y \in L'_{i}$.
  If $y = u_{1}$, then $(v_{1}, u_{p}, u_{1} = y)$ is a $v_{1}$--$y$ path in $B_{i}$.
  If $y = u_{c}$, then $(v_{1}, u_{c+1}, u_{c} = y)$ is a $v_{1}$--$y$ path in $B_{i}$.
  If $y = u_{j} \in V'_{2}$, then there is a $v_{1}$--$v_{j'}$ walk in $B_{i}$ where $v_{j'} \in V'_{1} \cap L_{i+1}$ is a vertex adjacent to $y$.
  Thus, we obtain a $v_{1}$--$y$ walk in $B_{i}$, and hence $B_{i}$ is connected.

  Next, we show that $B_{i}$ has a cycle.
  If $c = |N_{U}| = 1$, then $(u_{1}, u_{2}, v_{1}, u_{p}, u_{1})$ is a cycle in $B_{i}$.
  Then, suppose that $c \geq 2$.
  From the above discussion, subgraph $B_{i}$ has a $v_{3}$--$v_{q}$ path $P$ that does not contain $v_{1}$ and $v_{2}$.
  We obtain a cycle of $B_{i}$ from $P$ and the path $(v_{q}, u_{p}, v_{1}, v_{2}, v_{3})$.

  Thus, the component of $B$ that contains the vertices of $B_{i}$ is a connected graph with a cycle.
  Hence, $(V'_{1}, V'_{2})$ is a CIST-partition of $G_{i+1}$, and $|V'_{2} \cap L_{i+1}| = 1$.

  \vspace{1em}
  \noindent
  Case 2. $|N_{U}| \geq 2$.

  Next, we consider the case of $N_{U} = \{v_{1}, v_{2}, \dots, v_{d}\}$ for $d \geq 2$.
  Define $V'_{1} = (L_{i+1} \setminus \{v_{2}, \dots, v_{d}\}) \cup V_{1}$ and $V'_{2} = \{v_{2}, \dots, v_{d}\} \cup V_{2}$.
  This is illustrated in Fig.~\ref{fig:lem41}(b).
  We show that $(V'_{1}, V'_{2})$ is a CIST-partition of $G_{i+1}$.
  Since $u_{1}v_{1}, u_{c+1}v_{d} \in E(G)$, $G_{i+1}[V'_{1}]$ and $G_{i+1}[V'_{2}]$ are connected.

  Next, we show that $B = B(V'_{1}, V'_{2}, G_{i+1})$ does not have a tree component.
  Let
  \begin{align*}
    X &= \{u_{i} \mid 2 \leq i \leq c-1 \text{ and  } N_{G}(u_{i}) \cap
        L_{i+1} = \{v_{1}\}\} \\
      &\cup
    \{u_{i} \mid c+1 \leq i \leq q \text{ and } N_{G}(u_{i}) \cap L_{i+1} = \{v_{d}\}\}.
  \end{align*}
  Since $(V_{1}, V_{2})$ is a CIST-partition of $G_{i}$, the components of $B(V_{1}, V_{2}, G_{i})$ that contain vertices of $X$ have cycles in them.
  Let $L'_{i} = L_{i} \setminus X$.
  Note that every vertex in $L'_{i} \cap V'_{1}$ is adjacent to some vertex in $V'_{2}$, and every vertex in $L'_{i} \cap V'_{2}$ is adjacent to some vertex in $V'_{1}$.
  Define $B_{i} = B[L'_{i} \cup L_{i+1}]$.
  We show that $B_{i}$ is a connected graph with a cycle.
  Since $N_{U} = \{v_{1}, v_{2}, \dots, v_{d}\}$ and $G_{i+1}$ is a triangulated disc, $v_{1}$ is adjacent to $u_{p}$ and $v_{d}$ is adjacent to $u_{c+1}$.

  First, we prove that $B_{i}$ is connected by showing that there is a $v_{1}$--$y$ walk in $B_{i}$ for any $y \in L'_{i} \cup L_{i+1}$.
  Let $y = v_{j} \in L_{i+1}$.
  When $j=2$, $(v_{1}, v_{2})$ is a $v_{1}$--$v_{2}$ walk.
  For $3 \leq j \leq d$, let $P=(v_{2}, v_{3}, \dots, v_{j})$ be a $v_{2}$--$v_{j}$ path on $G[L_{i+1}]$.
  By Property~\ref{p:three}, for each edge $v_{t} v_{t+1}$, there exists $w_{t} \in L'_{i}$ such that $v_{t}w_{t}, v_{t+1}w_{t} \in E(B_{i})$.
  Thus we obtain a $v_{1}$--$v_{j}$ walk $W = (v_{1}, v_{2}, w_{2}, v_{3}, \dots, v_{j-1}, w_{j-1}, v_{j})$ in $B_{i}$.
  For $y = v_{d+1}$, we obtain a $v_{1}$--$v_{d+1}$ walk from the $v_{1}$--$v_{d}$ walk and the edge $v_{d}v_{d+1}$.
  For $d+2 \leq j \leq q$, we can construct a $v_{d+1}$--$v_{j}$ walk $W$ in $B_{i}$ in a manner similar to Case~1.
  Thus we obtain a $v_{1}$--$v_{j}$ walk from the $v_{1}$--$v_{d+1}$ walk and the $v_{d+1}$--$v_{j}$ walk.

  Let $y \in L'_{i}$.
  If $y = u_{1}$, then $(v_{1}, u_{p}, u_{1})$ is a $v_{1}$--$u_{1}$ path in $B_{i}$.
  If $y \neq u_{1}$, then there is a $v_{1}$--$v_{j}$ walk in $B_{i}$ where $v_{j}$ is a vertex adjacent to $y$.
  Thus, we obtain a $v_{1}$--$y$ walk in $B_{i}$.
  Hence, $B_{i}$ is connected.

  Next, we show that $B_{i}$ has a cycle.
  From the above discussion, the subgraph $B_{i}$ has a $v_{1}$--$v_{d}$ path $P$ such that $V(P) \subseteq U \cup N_{U}$, and $v_{d}$--$v_{1}$ path $P_{1}$ such that $V(P_{1}) \cap (U \cup N_{U}) = \{v_{1}, v_{d}\}$.
  We obtain a cycle of $B_{i}$ from $P$ and $P_{1}$.

  Thus, the component of $B$ that contains the vertices of $B_{i}$ is a connected graph with a cycle.
  Hence, $(V'_{1}, V'_{2})$ is a CIST-partition of $G_{i+1}$, and it holds $1 \leq |V'_{2} \cap L_{i+1}| < q$ and the vertices in $V'_{2}$ are arranged consecutively in $G[L_{i+1}]$.

  We obtained the desired results for both cases.
\end{proof}

From Theorem~\ref{thm:2out} and Lemma~\ref{lem:extend}, we obtain the following theorems.

\begin{thm}
  \label{thm:k-out}
  Let $G$ be a $k$-outerplanar triangulated disc.
  If $G_{2}$ is 3-connected and $G[L_{i}]$ are chordless cycles for every $3 \leq i \leq k$, then $G$ has two completely independent spanning trees.
\end{thm}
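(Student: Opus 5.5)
The plan is an induction on $i$, starting from $G_{2}$ and using Lemma~\ref{lem:extend} to pass from $G_{i}$ to $G_{i+1}$ for $i = 2, \dots, k-1$. Theorem~\ref{thm:araki} then turns the final CIST-partition of $G_{k} = G$ into two completely independent spanning trees. If $k \le 2$ there is nothing to extend: for $k=2$ the claim is exactly Theorem~\ref{thm:2out}, since $G_{2} = G$ is a 3-connected 2-outerplanar triangulated disc, and the case $k=1$ is outside the hypothesis as written. So assume $k \ge 3$.

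The base case comes from Theorem~\ref{thm:2out}. Since $G_{2}$ is a 3-connected 2-outerplanar triangulated disc, it has a CIST-partition $(V_{1}, V_{2})$ with $|V_{1} \cap L_{2}| = 1$. A single vertex is trivially a consecutive block on the cycle $G[L_{2}]$. Because $G_{2}$ is 3-connected, Property~\ref{p:one} makes $G[L_{2}]$ a chordless cycle with at least three vertices, so $1 \le |L_{2} \cap V_{1}| < |L_{2}|$ holds.

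The induction maintains the following invariant: $G_{i}$ has a CIST-partition in which one class meets $L_{i}$ in a nonempty, proper, consecutive set of vertices on the cycle $G[L_{i}]$. Lemma~\ref{lem:extend} outputs such a partition with the roles of the classes swapped, since it is $V'_{2}$ that is consecutive on $L_{i+1}$. Relabelling the two classes restores the form required by the lemma, because the definition of a CIST-partition is symmetric in $V_{1}$ and $V_{2}$. The remaining hypotheses of the lemma are that $G[L_{i+1}]$ is a chordless cycle, which is assumed for $i+1 \ge 3$, and that $G_{i}$ is 2-connected.

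The main obstacle is 2-connectivity of $G_{i}$ for $3 \le i \le k-1$. $G_{2}$ is 3-connected by assumption. For the step, I will argue that $G_{i+1}$ is obtained from the 2-connected graph $G_{i}$ by adding the chordless cycle $G[L_{i+1}]$ around the outside, with every vertex of $L_{i+1}$ adjacent to $L_{i}$ and each cycle edge lying in a triangle with a vertex of $L_{i}$ (Property~\ref{p:three}). Deleting one vertex then leaves the inner part connected, since $G_{i}$ is 2-connected. It also leaves the outer part connected, since the cycle minus a vertex is a path and each path vertex has a neighbour in $L_{i}$. The two parts remain joined because $G_{i}$ minus a vertex still has vertices adjacent to the cycle. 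The delicate spot is deleting a vertex of $L_{i}$: a vertex of $L_{i+1}$ might have that deleted vertex as its only inner neighbour. Such a vertex still lies on the path, which is connected to the rest through its other vertices, and since $|L_{i+1}| \ge 3$ the path has another vertex with an inner neighbour. I expect to check this carefully, possibly noting that Lemma~\ref{lem:extend} really only needs the layer structure and not 2-connectivity. Once this holds, applying the lemma $k-2$ times gives a CIST-partition of $G_{k} = G$, and Theorem~\ref{thm:araki} completes the proof.
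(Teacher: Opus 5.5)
Your proposal is correct and is exactly the argument the paper intends when it says the theorem follows from Theorem~\ref{thm:2out} and Lemma~\ref{lem:extend}: start from the partition with $|V_{1}\cap L_{2}|=1$, apply the lemma $k-2$ times while swapping the class labels, and finish with Theorem~\ref{thm:araki}. Your inductive check that each intermediate $G_{i}$ is 2-connected supplies a hypothesis of the lemma that the paper never verifies, but its justification needs one fix: after deleting $x\in L_{i}$, the cycle $G[L_{i+1}]$ stays attached not because $|L_{i+1}|\ge 3$ (which alone would not exclude a wheel), but because $L_{i}\setminus\{x\}\neq\emptyset$ and each of its vertices has a neighbour in $L_{i+1}$ by Property~\ref{p:four}, which is what your earlier sentence about the two parts remaining joined already requires.
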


\begin{thm}
  \label{thm:32-out}
  Let $G$ be a 3-outerplanar triangulated disc.
  If $G_{2}$ is 3-connected, then $G$ has two completely independent spanning trees.
\end{thm}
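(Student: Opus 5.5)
The plan is to deduce Theorem~\ref{thm:32-out} as the special case $k=3$ of Theorem~\ref{thm:k-out}, or equivalently to combine Theorem~\ref{thm:2out} with one application of Lemma~\ref{lem:extend}. Let $G$ be a 3-outerplanar triangulated disc with layers $L_{1}, L_{2}, L_{3}$, and assume $G_{2} = G[L_{1} \cup L_{2}]$ is 3-connected. Then $G_{2}$ is a 3-connected 2-outerplanar triangulated disc: its inner faces are inner faces of $G$ lying inside the cycle bounding $L_{2}$, and its layers are exactly $L_{1}$ and $L_{2}$.

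\textbf{Step 1.} Apply Theorem~\ref{thm:2out} to $G_{2}$. This gives a CIST-partition $(V_{1}, V_{2})$ of $G_{2}$ with $|V_{1} \cap L_{2}| = 1$. A single vertex is trivially a consecutive set on $G[L_{2}]$. Since $G[L_{2}]$ is a cycle, $|L_{2}| \geq 3$, so $1 \leq |L_{2} \cap V_{1}| < |L_{2}|$.

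\textbf{Step 2.} Verify the remaining hypotheses of Lemma~\ref{lem:extend} with $i = 2$. $G_{2}$ is 3-connected, hence 2-connected. We also need $G[L_{3}]$ to be a chordless cycle. This is the point I expect to need care, because the statement only assumes $G_{2}$ is 3-connected, not $G$. In the intended reading (the setting of Theorem~\ref{thm:k-out}, where $G$ is 3-connected or $G[L_{3}]$ is assumed chordless), Property~\ref{p:one} supplies this directly.

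If chordlessness of $G[L_{3}]$ is not available, I would proceed as follows. A chord of the outer cycle splits $G$ into two triangulated discs glued along an edge. Since $L_{1} \cup L_{2}$ is connected, it lies entirely on one side of the chord. I would then induct on the number of chords: take a CIST-partition of the side containing $G_{2}$, and extend it across each ear cut off by a chord. Each ear consists only of outer vertices and is a triangulated polygon. I would place the ear's new vertices so that both induced subgraphs stay connected and every new vertex lies in a bipartite component that already contains a cycle. Property~\ref{p:three} gives each outer edge a common neighbour, which makes this placement possible.

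\textbf{Step 3.} Lemma~\ref{lem:extend} now yields a CIST-partition $(V'_{1}, V'_{2})$ of $G_{3} = G$. By Theorem~\ref{thm:araki}, $G$ has two completely independent spanning trees.

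The main obstacle is confirming the chordless-cycle hypothesis on $L_{3}$. Everything else is a direct invocation of the earlier results.
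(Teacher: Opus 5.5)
Your main line (Theorem~\ref{thm:2out} on $G_{2}$, chordlessness of $G[L_{3}]$ from Property~\ref{p:one}, then Lemma~\ref{lem:extend} with $i=2$, i.e.\ Theorem~\ref{thm:k-out}) is exactly the paper's proof. The paper's proof likewise just writes ``since $G$ is 3-connected, $G[L_{3}]$ is a chordless cycle'', so it also silently assumes that $G$ itself is 3-connected. With that hypothesis stated explicitly, your Steps~1--3 are complete.

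Your fallback for a chorded $G[L_{3}]$ is a genuine gap, and it cannot be repaired, because without that hypothesis the statement is false.

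\begin{itemize}
\item \emph{Where the ear step fails.} Take the smallest ear, a vertex $z$ of degree $2$ whose neighbours $x,y$ are adjacent. In any CIST-partition $(V_{1},V_{2})$, $x$ and $y$ must lie in different parts. If $x,y\in V_{1}$ and $z\in V_{1}$, then $z$ is an isolated vertex of $B(V_{1},V_{2})$. If $x,y\in V_{1}$ and $z\in V_{2}$, connectivity of $G[V_{2}]$ forces $V_{2}=\{z\}$, and then the path $xzy$ is a tree component.
\item \emph{Why this hits your construction.} More generally, an ear attached along a monochromatic edge can never be absorbed, since it meets the rest of $G$ only in $x$ and $y$. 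The partition from Lemma~\ref{lem:extend} has only two bichromatic edges on the core's outer cycle. Property~\ref{p:three} does not change this.
\item \emph{A counterexample to the literal statement.} Take $G_{2}=K_{4}$, drawn as a triangle $u_{1}u_{2}u_{3}$ with one vertex inside. Surround it by a triangle $x_{1}x_{2}x_{3}$ with faces $x_{1}u_{1}u_{2}$, $x_{1}u_{2}x_{2}$, $x_{2}u_{2}u_{3}$, $x_{2}u_{3}x_{3}$, $x_{3}u_{3}u_{1}$, $x_{3}u_{1}x_{1}$. Then place in the outer face a vertex $z_{j}$ adjacent to $x_{j}$ and $x_{j+1}$, indices modulo $3$. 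This is a 3-outerplanar triangulated disc with $L_{3}=\{x_{1},x_{2},x_{3},z_{1},z_{2},z_{3}\}$ and $G_{2}=K_{4}$ 3-connected. Yet any CIST-partition would have to properly 2-color the triangle $x_{1}x_{2}x_{3}$. So by Theorem~\ref{thm:araki}, $G$ has no two CISTs.
\end{itemize}

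Drop the ear argument and add the missing hypothesis ($G$ 3-connected, or $G[L_{3}]$ chordless) to the statement.
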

\begin{proof}
  By Theorem~\ref{thm:2out}, $G_{2}$ has a CIST-partition
  $(V_{1}, V_{2})$ such that $|V_{1} \cap L_{2}| = 1$.
  Since $G$ is 3-connected, $G[L_{3}]$ is a chordless cycle of $ G$.
  By Theorem~\ref{thm:k-out}, $G$ has two CISTs.
\end{proof}

\begin{thm}
  \label{thm:3-out-2}
  Let $G$ be a 3-connected 3-outerplanar triangulated disc.
  If $G[L_{2}]$ is striped 1-outerplanar and $G[L_{1}]$ is a forest, then $G$ has two completely independent spanning trees.
\end{thm}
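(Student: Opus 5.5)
The plan is to mirror the proof of Theorem~\ref{thm:32-out}, using Theorem~\ref{thm:2-c-2-out} as the base case in place of Theorem~\ref{thm:2out}. After that, one application of Lemma~\ref{lem:extend} with $i=2$ and $k=3$ finishes the argument.

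\textbf{Step 1: hypotheses of Theorem~\ref{thm:2-c-2-out}.} I would first check that $G_{2}=G[L_{1}\cup L_{2}]$ satisfies them. By the remark opening Section~\ref{sec:3op}, $G_{2}$ is a 2-outerplanar triangulated disc with layers $L_{1},L_{2}$. By assumption, $G[L_{2}]$ is striped 1-outerplanar and $G[L_{1}]$ is a forest.

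It remains to show that $G_{2}$ is 2-connected. Since $G[L_{2}]$ is a maximal outerplanar graph on at least three vertices, it is 2-connected. Every vertex of $L_{1}$ lies inside some inner triangle of $G[L_{2}]$. The vertices of $L_{1}$ inside a given triangle $C$ form one tree component $T$ (by the argument in the proof of Theorem~\ref{thm:2-c-2-out}). Because the region inside $C$ is triangulated, each vertex of $T$ is attached to $C$, and $T\cup C$ is 2-connected. Hence deleting any single vertex leaves $G_{2}$ connected. If this argument needs more care, I would lean on the 3-connectivity of $G$ and the fact that the faces inside $C$ are triangles.

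Theorem~\ref{thm:2-c-2-out} then gives a CIST-partition $(V_{1},V_{2})$ of $G_{2}$ in which $V_{1}\cap L_{2}$ is consecutive on the outer cycle of $G[L_{2}]$.

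\textbf{Step 2: conditions of Lemma~\ref{lem:extend}.} The lemma needs $1\le |L_{2}\cap V_{1}|<|L_{2}|$. This holds because the construction uses $V_{i}\cap L_{2}=U_{i}$, and Lemma~\ref{lem:2-color} makes both $U_{1}$ and $U_{2}$ nonempty.

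The lemma also needs $G[L_{3}]$ to be a chordless cycle. This is Property~\ref{p:one}, applied because $G$ is 3-connected with triangular inner faces.

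One subtlety is that Lemma~\ref{lem:extend} refers to arrangement on ``$G[L_{i}]$'', which is a cycle in that lemma's setting. Here $G[L_{2}]$ carries chords, so I would read ``consecutive'' as consecutive along the outer boundary cycle of $G[L_{2}]$. The proof of Lemma~\ref{lem:extend} only uses the cyclic order $u_{1},\dots,u_{p}$ together with Properties~\ref{p:three} and~\ref{p:four} between $L_{2}$ and $L_{3}$. Chords inside $L_{2}$ only add edges of $G_{2}$, which is already handled by the CIST-partition of $G_{2}$.

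\textbf{Step 3: conclusion.} Applying Lemma~\ref{lem:extend} with $i=2$ yields a CIST-partition of $G_{3}=G$. Theorem~\ref{thm:araki} then gives two CISTs.

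\textbf{Main obstacle.} I expect the hardest point to be justifying that Lemma~\ref{lem:extend} still applies when $G[L_{2}]$ is not chordless. This matters for two facts the lemma's proof uses. First, $v_{1}$ must be adjacent to $u_{c+1}$ and $u_{p}$. Second, the vertices of $L_{2}$ whose $L_{3}$-neighbourhood is a single vertex must already lie in non-tree components of $B(V_{1},V_{2},G_{2})$.

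I would verify both facts by observing that the annulus between the cycles $L_{3}$ and the outer boundary of $L_{2}$ is triangulated exactly as in the chordless case. Chords of $G[L_{2}]$ lie inside that boundary, so they do not affect this annulus.
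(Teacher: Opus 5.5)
Your proof is correct and follows the paper's argument: take the CIST-partition of $G_2$ from Theorem~\ref{thm:2-c-2-out} with $V_1\cap L_2$ consecutive, note that $G[L_3]$ is a chordless cycle, and extend to $G$ via Lemma~\ref{lem:extend}. The paper instead cites Theorem~\ref{thm:k-out}, whose hypothesis that $G_2$ is 3-connected fails once the striped $G[L_2]$ has chords; applying Lemma~\ref{lem:extend} directly with $i=2$, as you do, is the accurate form of that step, and you also check what the paper leaves implicit ($G_2$ is 2-connected, $1\le|L_2\cap V_1|<|L_2|$, and chords of $G[L_2]$ lie inside its outer cycle, away from the region between $L_2$ and $L_3$).
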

\begin{proof}
  By Theorem~\ref{thm:2-c-2-out}, $G_{2}$ has a CIST-partition
  $(V_{1}, V_{2})$ such that the vertices in $V_{1}$ are arranged consecutively in the clockwise direction on $G[L_{2}]$.
  Since $G$ is 3-connected, $G[L_{3}]$ is a chordless cycle of $G$.
  By Theorem~\ref{thm:k-out}, $G$ has two CISTs.
\end{proof}

The sufficient conditions presented so far require $G_{2}$ to have two CISTs.
However, there exist examples where $G_{2}$ does not have CISTs but $G$ has two CISTs.
The graph in Fig.~\ref{fig:CIST-3-3} shows a 3-connected 3-outerplanar triangulated disc $G$ with a CIST-partition.
Subgraph $G_{2} = G[L_{1} \cup L_{2}]$ is isomorphic to the graph in Fig.~\ref{fig:noCISTs}; thus, $G$ is an example of a triangulated disc that is 3-connected, and $G_{2}$ does not have a CIST-partition.

\begin{figure}[tbp]
  \centering
  \includegraphics{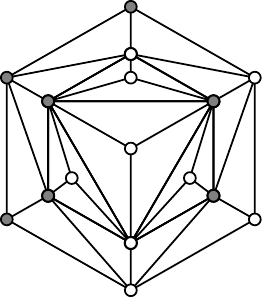}
  \caption{A 3-connected 3-outerplanar graph $G$ that has a CIST-partition $(V_{1}, V_{2})$.
    The sets of black and white vertices are members of $V_{1}$ and $V_{2}$, respectively.
    The subgraph $G_{2} = G[L_{1} \cup L_{2}]$ is isomorphic to the graph shown in Fig.~\ref{fig:noCISTs}.
  }
  \label{fig:CIST-3-3}
\end{figure}


\section{4-Outerplanar graphs}
\label{sec:4op}

In this section, we show that there exists a 3-connected 4-outerplanar triangulated disc that does not have two CISTs.
P\'{e}terfalvi~\cite{peterfalvi12} showed that a 3-connected plane triangulation with no CISTs can be constructed from a 3-connected 3-regular non-Hamiltonian planar graph.
Let $G$ be a 3-connected, 3-regular, non-Hamiltonian planar graph, and let $G^{*}$ be the dual of $G$.
Then, $G^{*}$ is a 3-connected plane triangulation.
Let us add a new vertex to every face (including the outer face) of $G^{*}$ and connect it to each vertex of the face.
The resulting graph, denoted by $D_{G}$, is also 3-connected and triangulated.
In~\cite{peterfalvi12}, it was shown that the graph $D_{G}$ does not have two CISTs.

For example, the Tutte graph is 3-connected, 3-regular, and non-Hamiltonian.
Figure~\ref{fig:tutte} shows the Tutte graph $G$, and its dual is shown in Fig.~\ref{fig:tutte_dual}.
Then, $D_{G}$ is the graph shown in Fig.~\ref{fig:dual_tutte_d}, and we can see that the embedding is 4-outerplanar. 
Hence, we obtain the following theorem.

\begin{thm}
  \label{thm:4out}
  There exists a 3-connected triangulated disc with a 4-outerplanar embedding that does not have two completely independent spanning trees.
\end{thm}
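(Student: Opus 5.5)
The plan is to take the construction of P\'{e}terfalvi~\cite{peterfalvi12} with $G$ the Tutte graph and verify each property separately. $G$ is 3-connected, 3-regular, planar and non-Hamiltonian, so its dual $G^{*}$ is a 3-connected plane triangulation. $D_{G}$ is obtained by inserting a new vertex into every face of $G^{*}$, including the outer face, and joining it to the three vertices of that face. P\'{e}terfalvi proved that $D_{G}$ has no two CISTs, and we use that result as given. If we want to recall why: by Theorem~\ref{thm:araki} a CIST-partition $(V_{1},V_{2})$ would force the degree-3 inserted vertices and the bipartite graph $B(V_{1},V_{2})$ to interact in a way that yields a Hamiltonian cycle of $G$, which is a contradiction. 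It remains to check three things: $D_{G}$ is 3-connected, it is a triangulated disc, and it has a 4-outerplanar embedding.

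\emph{3-connectivity and triangulation.} Every face of $D_{G}$ is a triangle, since each face of $G^{*}$ is split into three triangles by its inserted vertex. Hence $D_{G}$ is a plane triangulation, and in particular a triangulated disc whose inner faces are triangles. For 3-connectivity, suppose $S$ is a set of at most two vertices. Then $D_{G}-S$ restricted to $V(G^{*})$ stays connected because $G^{*}$ is 3-connected. Each inserted vertex outside $S$ has three neighbours in $V(G^{*})$, so at least one neighbour survives, and the inserted vertex is still attached.

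\emph{4-outerplanarity.} This is the main step and must be read off the concrete embedding. Its outer face is the triangle formed by the vertex inserted into the outer face of $G^{*}$ and two vertices of that face. We peel layers one at a time. After removing the outer triangle, the new outer boundary consists of the remaining vertices of the former outer face of $G^{*}$ together with the inserted vertices in the faces adjacent to it. Repeating this, we compute the layers explicitly on the drawing of Fig.~\ref{fig:dual_tutte_d}. We choose the embedding of the Tutte graph so that the dual's outer face is central, placing its three-fold symmetry about the middle. With that choice, the dual $G^{*}$ (25 faces of Tutte $\to$ 25 vertices) should have outerplanarity 3. Each inserted vertex lies inside a triangle of $G^{*}$ and is removed no later than one round after that triangle's vertices begin to appear on the boundary. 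This should give a total of at most four layers. Using the threefold symmetry, it suffices to check one of the three Tutte fragments, which roughly halves the bookkeeping.

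The obstacle is exactly this layer count. A careless embedding could require five layers, so the proof will rest on choosing the outer face of $G^{*}$ well, namely the face dual to the central vertex region of the Tutte graph. We then verify layer by layer from the figure that after four peelings nothing remains. Once that is done, the theorem follows by combining the three verified properties with P\'{e}terfalvi's non-existence result.
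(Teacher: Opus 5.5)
This is essentially the paper's argument: P\'{e}terfalvi's graph $D_G$ built from the Tutte graph, his non-existence theorem taken as given, and a direct check that the chosen embedding peels off in four layers. Your 3-connectivity argument supplies a step the paper only asserts.

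One caution for the layer check: the outer face of $D_G$ contains only two corners $a,b$ of the outer triangle $abc$ of $G^{*}$, so a vertex's layer is one plus its distance to $\{a,b\}$. The heuristic ``$G^{*}$ is 3-outerplanar, plus one round for inserted vertices'' therefore only bounds the count by five, and the threefold symmetry you planned to use is broken. What actually gives four for your choice (outer face of $G^{*}$ dual to the central vertex of the Tutte graph) is that every vertex of $G^{*}$ lies within distance $2$ of any two of $a,b,c$.
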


\begin{figure}[tbp]
  \centering
  \includegraphics[scale=0.6]{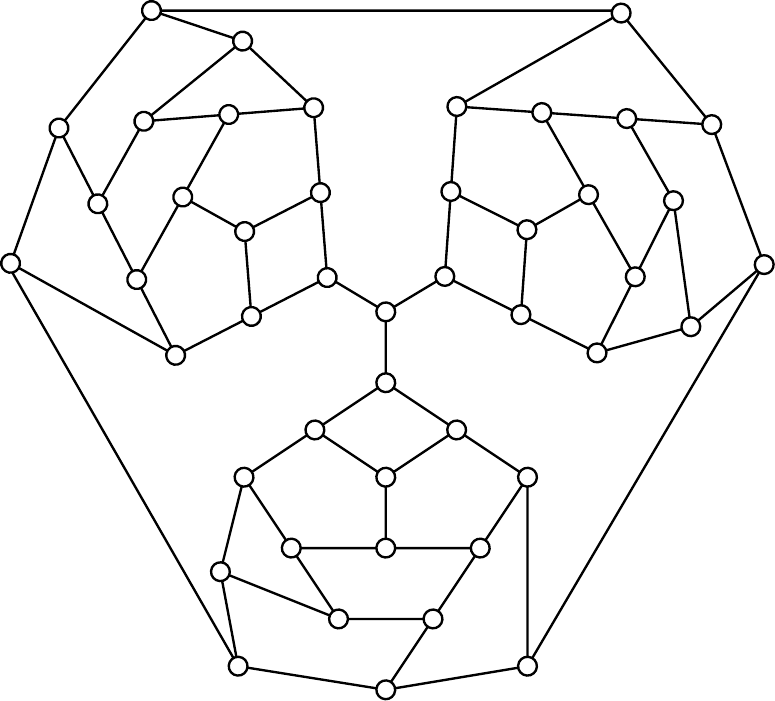}
  \caption{Tutte graph $G$.}
  \label{fig:tutte}
\end{figure}

\begin{figure}[tbp]
  \centering
  \includegraphics[scale=0.7]{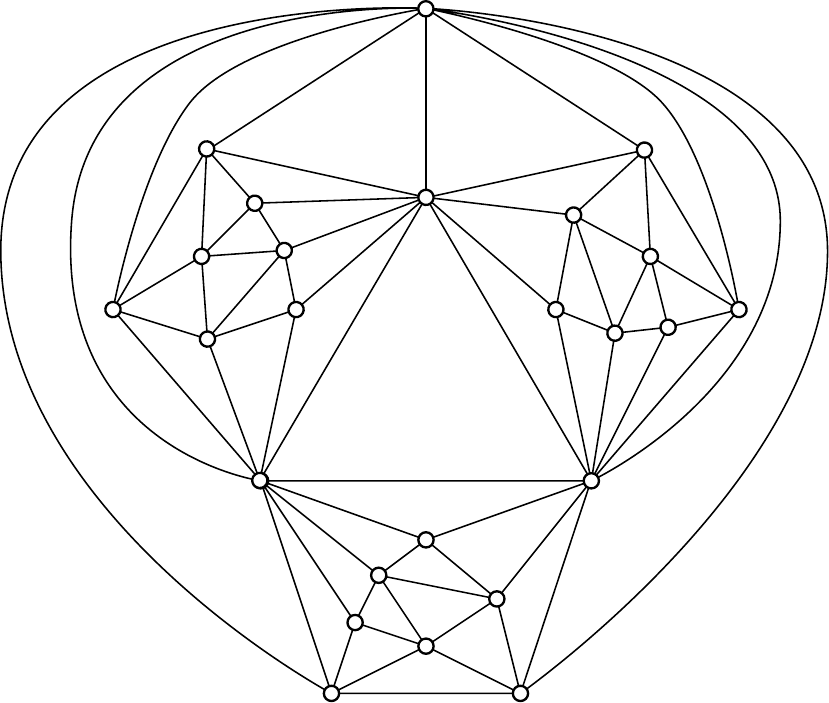}
  \caption{The dual of Tutte graph $G^{*}$.}
  \label{fig:tutte_dual}
\end{figure}

\begin{figure}[tbp]
  \centering
  \includegraphics[width=1.0\textwidth]{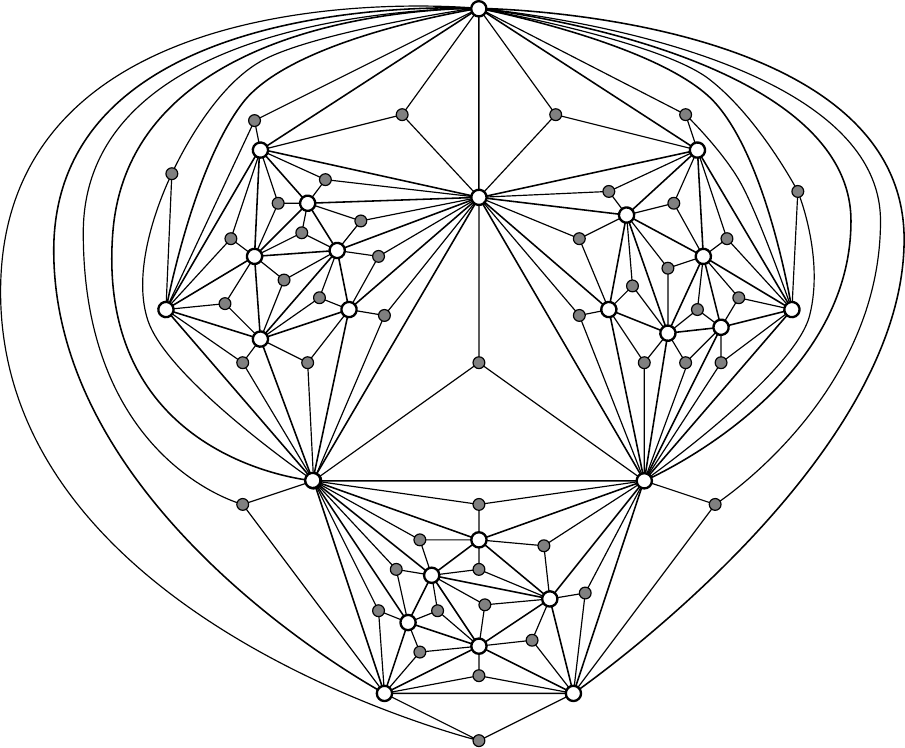}
  \caption{The graph $D_{G}$ that is made from the Tutte graph.
    The gray vertices are the vertices added to the dual $G^{*}$ to construct $D_{G}$.
    This graph does not have two completely independent spanning trees.
    We can see that this embedding is 4-outerplanar.}
  \label{fig:dual_tutte_d}
\end{figure}


\section{Conclusion}
\label{sec:conclusion}

Hasunuma~\cite{hasunuma02} proved that every 4-connected plane triangulation has two CISTs.
Later, P\'{e}terfalvi~\cite{peterfalvi12} pointed out that a 3-connected plane triangulation with no CISTs exists.

In this paper, we investigate the existence of CISTs in 3-connected $k$-outerplanar triangulated discs.
We showed some sufficient conditions for a 3-connected $k$-outerplanar graph to have two CISTs for $k = 2, 3$, and an example of a 3-connected 4-outerplanar triangulation with no CISTs.

The results of this study suggest that several problems remain regarding completely independent spanning trees and the outer-planarity of planar graphs.
We conclude this paper by proposing two conjectures:

\begin{conj}
  Every 3-connected 3-outerplanar triangulated disc has two completely independent spanning trees.
\end{conj}

\begin{conj}
  For any $k \geq 4$, there exists a 3-connected $k$-outerplanar triangulated disc with no completely independent spanning trees.
\end{conj}

\end{document}